\renewcommand{\P}{\mathcal{P}}
\newcommand{\U}{\mathcal{U}}
\newcommand{\R}{{\mathcal{R}}}
\newcommand{\G}{\mathcal{G}}
\newcommand{\beqs}{\begin{equation*}}
\newcommand{\eeqs}{\end{equation*}}
\numberwithin{equation}{section}
 \theoremstyle{plain}
\newtheorem{theorem}{Theorem}[section]
\theoremstyle{remark}
\begin{document}

\makeatletter
\def\imod#1{\allowbreak\mkern10mu({\operator@font mod}\,\,#1)}
\makeatother

\author{Alexander Berkovich}
   \address{Department of Mathematics, University of Florida, 358 Little Hall, Gainesville FL 32611, USA}
   \email{alexb@ufl.edu}

\author{Ali Kemal Uncu}
   \address{Department of Mathematics, University of Florida, 358 Little Hall, Gainesville FL 32611, USA}
   \email{akuncu@ufl.edu}

\title[Variation on a theme of Nathan Fine. New weighted partition identities]{\hspace{1.55cm}Variation on a theme of Nathan Fine.\newline New weighted partition identities}


\dedicatory{Dedicated to our friend, Krishna Alladi, on his 60th birthday.}     

\begin{abstract}  
We utilize false theta function results of Nathan Fine to discover three new partition identities involving weights. These relations connect G\"ollnitz--Gordon type partitions and partitions with distinct odd parts, partitions into distinct parts and ordinary partitions, and partitions with distinct odd parts where the smallest positive integer that is not a part of the partition is odd and ordinary partitions subject to some initial conditions, respectively. Some of our weights involve new partition statistics, one is defined as the number of different odd parts of a partition larger than or equal to a given value and another one is defined as the number of different even parts larger than the first integer that is not a part of the partition.
\end{abstract}   
   
\keywords{Weighted count of partitions, Gordon--G\"ollnitz partitions, Rogers--Ramanujan partitions, False theta functions, Heine transformation, $q$-Gauss summation, Modular Partitions}

 \subjclass[2010]{05A17, 05A19, 11B34, 11B75, 11P81, 11P84, 33D15}

\date{\today}
   
\maketitle
\section{Introduction}

A \textit{partition}, $\pi=(\lambda_1,\lambda_2,\dots)$, is a finite sequence of non-increasing positive integers. Let $\nu(\pi)$ be the number of elements of $\pi$. These elements, $\lambda_i$ for $ i\in\{1,\dots,\nu(\pi)\}$, are called \textit{parts} of the partition $\pi$. The \textit{norm} of a partition $\pi$, denoted $|\pi|$, is defined as the sum of all its parts. We call a partition $\pi$ \textit{a partition of }$n$ if $|\pi|=n$. Conventionally, we define the empty sequence to be the unique partition of zero. Also define $\nu_d(\pi)$ as the number of different parts of $\pi$. For example, $\pi = (10,10,5,5,4,1)$ is a partition of 35 with $\nu(\pi)=6$ and $\nu_d(\pi)=4$.

In 1997, Alladi \cite{AlladiWeighted} began a systematic study of weighted partition identities. Among many interesting results, he proved that
\begin{theorem} [Alladi, 1997] \label{Overpartitions_THM} 
\begin{equation}\label{overpartitions}
\frac{(a(1-b)q;q)_n}{(aq;q)_n} = \sum_{\pi\in \mathcal{U}_n} a^{\nu(\pi)} b^{\nu_d(\pi)} q^{|\pi|},
\end{equation}
where $\mathcal{U}_n$ is the set of partitions with the largest part $\leq n$.
\end{theorem}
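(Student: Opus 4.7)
The plan is to prove the identity combinatorially by factoring the generating function on the right over part-sizes. A partition $\pi \in \mathcal{U}_n$ is determined by its multiplicity vector $(m_1,m_2,\dots,m_n)$, where $m_i \geq 0$ records how many times $i$ appears as a part. Under this encoding we have $\nu(\pi) = \sum_i m_i$, $|\pi| = \sum_i i\, m_i$, and
\[
\nu_d(\pi) = \#\{i : m_i \geq 1\} = \sum_{i=1}^n [m_i \geq 1].
\]
Because the three statistics are additive over the part-sizes, the weighted sum factors as an independent product over $i = 1,2,\dots,n$.

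Next I would evaluate each factor. For a fixed $i$ the contribution is
\[
\sum_{m=0}^{\infty} a^m b^{[m\geq 1]} q^{im} = 1 + b\sum_{m=1}^{\infty} (aq^i)^m = 1 + \frac{b\,aq^i}{1-aq^i} = \frac{1 - a(1-b)q^i}{1-aq^i},
\]
which is valid as a formal power series in $q$. Taking the product over $i$ from $1$ to $n$ gives
\[
\sum_{\pi\in\mathcal{U}_n} a^{\nu(\pi)} b^{\nu_d(\pi)} q^{|\pi|} = \prod_{i=1}^{n} \frac{1-a(1-b)q^i}{1-aq^i} = \frac{(a(1-b)q;q)_n}{(aq;q)_n},
\]
which is the desired identity.

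There is essentially no obstacle here; the only point requiring a little care is the justification that one may interchange the (finite) product over $i$ with the (infinite) sums over $m_i$, which is immediate once one works in the ring of formal power series in $q$ with coefficients in $\mathbb{Z}[a,b]$, since each $m_i$ is bounded once the $q$-degree is fixed. The heart of the argument is the observation that the statistic $\nu_d$ contributes exactly one factor of $b$ per occupied part-size, which is precisely what converts the bare geometric series $1/(1-aq^i)$ into the Rogers--Ramanujan--style factor $(1-a(1-b)q^i)/(1-aq^i)$.
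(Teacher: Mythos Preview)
Your argument is correct. The factorization over part-sizes is the standard way to see this identity, and your computation of the single-part contribution
\[
\sum_{m\geq 0} a^m b^{[m\geq 1]} q^{im} \;=\; \frac{1-a(1-b)q^i}{1-aq^i}
\]
is exactly right.

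As for comparison with the paper: there is nothing to compare to. The paper states this theorem in the introduction as a known result of Alladi (1997) and cites \cite{AlladiWeighted}; it gives no proof. It is used only to motivate the weighted-count interpretation of $(-q;q)_n/(q;q)_n$ (the case $a=1$, $b=2$) and is not revisited. Your self-contained proof is therefore a genuine addition rather than a reworking of anything in the text.
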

In \eqref{overpartitions} and in the rest of the paper we use the standard q-Pochhammer symbol notations defined in \cite{Theory_of_Partitions}, \cite{GasperRahman}. Let $L$ be a non-negative integer, then
\begin{equation*}
(a;q)_L := \prod_{i=0}^{L-1} (1-aq^{i}) \text{  and  } (a;q)_\infty :=\lim_{L\rightarrow\infty} (a;q)_L.
\end{equation*}

Theorem~\ref{Overpartitions_THM} provides a combinatorial interpretation for the left-hand side product of \eqref{overpartitions} as a weighted count of ordinary partitions with a restriction on the largest part. In \cite{Overpartitions_Paper}, Corteel and Lovejoy elegantly interpreted \eqref{overpartitions} with $a=1$ and $b=2$,
\begin{equation}\label{weights_of_overpartitions}
\frac{(-q;q)_n}{(q;q)_n} = \sum_{\pi \in \mathcal{U}_n} 2^{\nu_d(\pi)}q^{|\pi|},
\end{equation}
in terms of \textit{overpartitions}. 

Also in \cite{AlladiWeighted}, Alladi discovered and proved a weighted partition identity relating unrestricted partitions and the Rogers--Ramanujan partitions. Let $\mathcal{U}$ be the set of all partitions, and let $\R\R$ be the set of partitions with difference between parts $\geq 2$. 

\begin{theorem}[Alladi, 1997] \label{Alladi_weighted_sum} 
\begin{equation}\label{omega_12}\sum_{\pi\in \R\R} \omega(\pi) q^{|\pi|} = \sum_{\pi\in \mathcal{U}} q^{|\pi|},\end{equation}where
\begin{equation}\label{myweight}\omega(\pi) := \lambda_{\nu(\pi)}\cdot\prod_{i=1}^{\nu(\pi)-1} (\lambda_{i}-\lambda_{i+1}-1),\end{equation}
and the weight of the empty sequence is considered to be the empty product and is set equal to 1.
\end{theorem}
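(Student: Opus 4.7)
The plan is to reduce the weighted sum on the left of \eqref{omega_12} to the Durfee square identity $\sum_{n\geq 0} q^{n^2}/(q;q)_n^2 = 1/(q;q)_\infty$, noting that $1/(q;q)_\infty$ is exactly the right-hand side of \eqref{omega_12}.

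First I would split the left-hand sum by $n := \nu(\pi)$ and make the standard Rogers--Ramanujan staircase substitution $\lambda_i = \mu_i + 2(n-i)+1$, which sets up a bijection between $\pi \in \R\R$ with $n$ parts and ordinary partitions $\mu$ with at most $n$ parts (zero parts allowed). Under this change of variables, $|\pi| = n^2 + |\mu|$, while $\lambda_n = \mu_n + 1$ and $\lambda_i - \lambda_{i+1} - 1 = \mu_i - \mu_{i+1} + 1$, so
$$\omega(\pi) = (\mu_n+1)\prod_{i=1}^{n-1}(\mu_i - \mu_{i+1} + 1).$$

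Next I would pass to the difference coordinates $a_i := \mu_i - \mu_{i+1}$ for $i<n$ and $a_n := \mu_n$, all non-negative integers. The crucial observation, and the reason this particular weight is the ``right'' one, is that the weight telescopes to a clean product:
$$\omega(\pi) = \prod_{i=1}^{n}(a_i + 1),\qquad |\mu| = \sum_{i=1}^n i\, a_i,$$
so the weight decouples across the $a_i$ and the inner sum factors as
$$\sum_{a_1,\ldots,a_n\geq 0} \prod_{i=1}^n (a_i+1) q^{i a_i} = \prod_{i=1}^n \frac{1}{(1-q^i)^2} = \frac{1}{(q;q)_n^2}.$$
Summing over $n\geq 0$ gives $\sum_{n\geq 0} q^{n^2}/(q;q)_n^2$, and this is $1/(q;q)_\infty$ by the classical Durfee square decomposition.

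The main point requiring care is conceptual rather than computational: one has to notice that after the staircase shift, Alladi's weight $\omega(\pi)$ is precisely the product that collapses to $\prod_i (a_i+1)$ under the difference substitution, decoupling the multiple sum into a product of one-variable geometric-type sums. Once this factorization is in hand, everything reduces to the Durfee square identity, and no further $q$-series machinery is needed.
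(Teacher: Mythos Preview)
Your argument is correct. The staircase shift $\lambda_i = \mu_i + 2(n-i)+1$, the passage to difference variables $a_i$, the factorization $\sum_{a\geq 0}(a+1)x^a = (1-x)^{-2}$, and the final appeal to the Durfee square identity $\sum_{n\geq 0} q^{n^2}/(q;q)_n^2 = 1/(q;q)_\infty$ are all sound, and the bookkeeping $|\mu| = \sum_i i\,a_i$ checks out.

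Note, however, that the paper does not actually prove this theorem: it is stated in the Introduction as a result of Alladi \cite{AlladiWeighted} and quoted without proof. So there is no ``paper's own proof'' to compare against. That said, your approach is essentially the algebraic form of Alladi's original argument: in \cite{AlladiWeighted} the weight $\omega(\pi)$ is interpreted combinatorially as the number of ordinary partitions whose Durfee square has side $n=\nu(\pi)$ and whose ``below-Durfee'' and ``right-of-Durfee'' pieces are governed by the gap structure of $\pi$; this is exactly what your change of variables encodes. The same column-insertion philosophy reappears in the paper's Sections~\ref{GollnitzWeights} and~\ref{Overpartitions_Section} when the authors derive the weights $\omega_1$, $\omega_2$, $\widetilde\omega_1$, $\widetilde\omega_2$, so your method is fully in the spirit of the paper even though the paper itself defers to Alladi for this particular result.
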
 

In \eqref{omega_12} the set $\R\R$ can be replaced with the set of partitions into distinct parts, denoted $\mathcal{D}$. \begin{equation}\label{RR_to_D}
\sum_{\pi\in\R\R} \omega(\pi) q^{|\pi|} = \sum_{\pi\in \mathcal{D}} \omega(\pi) q^{|\pi|}.
\end{equation} The weight $\omega(\pi)$ of \eqref{myweight} for a partition $\pi$ becomes 0 if the gap between consecutive parts of $\pi$ is ever 1. 
Alladi \cite{Alladi_private_comm} time and time again expressed his desire of seeing a weighted partition identity connecting $\mathcal{D}$ and $\mathcal{U}$ with non-vanishing weights on $\mathcal{D}$ and where we count partitions from $\mathcal{U}$, in the regular fashion, with weight one. In this paper, we discuss a couple of examples of weighted partition identities connecting $\mathcal{D}$ and $\mathcal{U}$, thus taking a step towards the eventual solution to Alladi's problem.
The reader interested in weighted partition identities and their applications may also wish to examine \cite{AlladiWeighted}, \cite{AlladiAnnalsOfComb}, \cite{AlladiBerkovich2}, \cite{AlladiBerkovich}, and \cite{Uncu}. In particular, Theorem~\ref{Alladi_weighted_sum} is extended in \cite{Uncu}. 

In Section~\ref{Section2} we will introduce different representations of partitions which we are going to use later. Section~\ref{GollnitzWeights} discusses weighted partition identities connecting G\"ollnitz-Gordon type partitions and partitions with distinct odd parts. Some combinatorial connections between $\mathcal{D}$ and $\mathcal{U}$ will be presented in Section~\ref{Overpartitions_Section}. In Section~\ref{Section_Restricted}, we consider partitions with distinct odd parts such that the smallest positive integer that is not a part of the partition is odd.

\section{Useful Definitions}\label{Section2}

A Ferrers diagram of a partition $\pi=(\lambda_1,\lambda_2,\dots)$ is a diagram of boxes which has $\lambda_i$ many boxes on its $i$-th row. Whence the number of boxes on $i$-th row gives the size of the part $\lambda_i$. There is a one-to-one correspondence between Ferrers diagrams and partitions. The words \textit{partition} and \textit{Ferrers diagram} can be used interchangeably. An example of a Ferrers diagram is given in Table~\ref{Table_Ferrers_Diagrams}.

We note that Ferrers diagrams represented with boxes as opposed to dots are usually named \textit{Young tableaux}. Young tableaux are often used in a wider context than partitions. We will be using the name Ferrers diagrams to avoid any type of confusion and to remind the reader that we are strictly focusing on partitions.

We define the 2-modular Ferrers diagram similar to the Ferrers diagrams. Let $\lceil x\rceil$ denote the smallest integer $\geq x$. For a given partition $\pi=(\lambda_1,\lambda_2,\dots)$, we draw $\lceil \lambda_i / 2 \rceil$ many boxes at the $i$-th row. We decorate the boxes on the $i$-th row with 2's with the option of having a 1 at the right most box of the row, such that the sum of the numbers in the boxes of the $i$-th row becomes $\lambda_i$. Table~\ref{Table_Ferrers_Diagrams} includes an example of a 2-modular Ferrers diagram.\\

\begin{center}
\begin{table}[htb]\caption{The Ferrers Diagram and the 2-modular Ferrers Diagram of the partition $\pi=(10,9,5,5,4,1)$.}\label{Table_Ferrers_Diagrams}
\definecolor{cqcqcq}{rgb}{0.75,0.75,0.75}
\begin{tikzpicture}[line cap=round,line join=round,>=triangle 45,x=0.5cm,y=0.5cm]
\clip(0.5,0.5) rectangle (19.5,7.5);
\draw [line width=1pt] (1,1)-- (2,1);
\draw [line width=1pt] (2,1)-- (2,7);
\draw [line width=1pt] (1,7)-- (11,7);
\draw [line width=1pt] (11,7)-- (11,6);
\draw [line width=1pt] (11,6)-- (1,6);
\draw [line width=1pt] (1,7)-- (1,1);
\draw [line width=1pt] (1,2)-- (5,2);
\draw [line width=1pt] (5,2)-- (5,7);
\draw [line width=1pt] (10,7)-- (10,5);
\draw [line width=1pt] (10,5)-- (1,5);
\draw [line width=1pt] (9,5)-- (9,7);
\draw [line width=1pt] (8,7)-- (8,5);
\draw [line width=1pt] (7,7)-- (7,5);
\draw [line width=1pt] (6,7)-- (6,3);
\draw [line width=1pt] (6,3)-- (1,3);
\draw [line width=1pt] (1,4)-- (6,4);
\draw [line width=1pt] (4,2)-- (4,7);
\draw [line width=1pt] (3,7)-- (3,2);
\draw [line width=1pt] (14,7)-- (19,7);
\draw [line width=1pt] (19,7)-- (19,5);
\draw [line width=1pt] (19,5)-- (14,5);
\draw [line width=1pt] (14,7)-- (14,1);
\draw [line width=1pt] (15,1)-- (15,7);
\draw [line width=1pt] (17,3)-- (17,7);
\draw [line width=1pt] (18,5)-- (18,7);
\draw [line width=1pt] (16,7)-- (16,2);
\draw [line width=1pt] (19,6)-- (14,6);
\draw [line width=1pt] (17,4)-- (14,4);
\draw [line width=1pt] (17,3)-- (14,3);
\draw [line width=1pt] (16,2)-- (14,2);
\draw [line width=1pt] (15,1)-- (14,1);
\draw (14.5,6.5) node[anchor=center] {2};
\draw (15.5,6.5) node[anchor=center] {2};
\draw (16.5,6.5) node[anchor=center] {2};
\draw (17.5,6.5) node[anchor=center] {2};
\draw (18.5,6.5) node[anchor=center] {2};
\draw (17.5,5.5) node[anchor=center] {2};
\draw (16.5,5.5) node[anchor=center] {2};
\draw (15.5,5.5) node[anchor=center] {2};
\draw (14.5,5.5) node[anchor=center] {2};
\draw (14.5,4.5) node[anchor=center] {2};
\draw (15.5,4.5) node[anchor=center] {2};
\draw (15.5,3.5) node[anchor=center] {2};
\draw (15.5,2.5) node[anchor=center] {2};
\draw (14.5,3.5) node[anchor=center] {2};
\draw (14.5,2.5) node[anchor=center] {2};
\draw (18.5,5.5) node[anchor=center] {1};
\draw (16.5,4.5) node[anchor=center] {1};
\draw (16.5,3.5) node[anchor=center] {1};
\draw (14.5,1.5) node[anchor=center] {1};
\draw (12,4) node[anchor=center] {,};
\end{tikzpicture}
\end{table}\vspace{-1cm}
\end{center}

It should be noted that the \textit{conjugate of a Ferrers diagram} \cite{Theory_of_Partitions} (drawing the Ferrers diagram column-wise and then reading it row-wise) also represents a partition. The analogous conjugation procedure only yields admissible 2-modular Ferrers diagrams if the starting partition has distinct odd parts (it may still have repeating even parts). In the example of Table~\ref{Table_Ferrers_Diagrams}, the conjugate partition of $\pi$ is $(6,5,5,5,4,2,2,2,2,1)$. The conjugate of the 2-modular Ferrers diagram of $\pi$ is not an admissible 2-modular diagram, as there would be a row with two separate boxes decorated with 1's.

Another representation of a partition $\pi$ is the \textit{frequency notation} $\pi = (1^{f_1}, 2^{f_2}, \dots)$, where $f_i(\pi)=f_i$ is the number of occurrences of part $i$ in $\pi$. In this representation the partition in Table~\ref{Table_Ferrers_Diagrams} is $(1^1,2^0,3^0,4^1,5^2,6^0,7^0,8^0,9^1,10^1,11^0,\dots)$.  It is customary to ignore zero frequencies in notation for simplicity. This way the running example can be written as $\pi = (1^1,4^1,5^2,9^1,10^1)$. Occasionally zero frequencies are shown in the representation to stress the absence of a part.

We define the basic $q$-hypergeometric series as they appear in \cite{GasperRahman}. Let $r$ and $s$ be non-negative integers and $a_1,a_2,\dots,a_r,b_1,b_2,\dots,b_s,q,$ and $z$ be variables. Then \begin{equation}\label{2Phi1}_r\phi_s\left(\genfrac{}{}{0pt}{}{a_1,a_2,\dots,a_r}{b_1,b_2,\dots,b_s};q,z\right):=\sum_{n=0}^\infty \frac{(a_1;q)_n(a_2;q)_n\dots (a_r;q)_n}{(q;q)_n(b_1;q)_n\dots(b_s;q)_n}\left[(-1)^nq^{n\choose 2}\right]^{1-r+s}z^n.\end{equation}
Let $a$, $b$, $c$, $q$, and $z$ be variables. The $q$-Gauss summation formula \cite[II.8, p. 236]{GasperRahman} \begin{equation}\label{qGauss}
{}_2\phi_1 \left(\genfrac{}{}{0pt}{}{a,\ b}{c};q,\frac{c}{ab} \right) = \frac{(\frac{c}{a};q)_\infty (\frac{c}{b};q)_\infty}{(c;q)_\infty (\frac{c}{ab};q)_\infty},
\end{equation} and one of the three Heine's transformations \cite[III.2, p. 241]{GasperRahman}
\begin{equation}\label{Heine}
{}_2\phi_1 \left(\genfrac{}{}{0pt}{}{a,\ b}{c};q,z \right) = \frac{(\frac{c}{b};q)_\infty (bz;q)_\infty}{(c;q)_\infty(z;q)_\infty} {}_2\phi_1 \left(\genfrac{}{}{0pt}{}{\frac{abz}{c},\ b}{bz};q,\frac{c}{b} \right) 
\end{equation} will be used later on.

\section{Weighted partition identities involving G\"ollnitz--Gordon type partitions}\label{GollnitzWeights}

We start by reminding the reader of the well-known G\"ollnitz--Gordon identities of 1960's.

\begin{theorem}[Slater, 1952]\label{Analytic_GG_identities_THM} For $i\in\{1,2\}$
\begin{equation}\label{Analytic_GG_identities}
\sum_{n\geq 0} \frac{q^{n^2+2(i-1)n}(-q;q^2)_n}{(q^2;q^2)_n} = \frac{1}{(q^{2i-1};q^8)_\infty(q^4;q^8)_\infty (q^{9-2i};q^8)_\infty}.
\end{equation}
\end{theorem}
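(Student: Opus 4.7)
The plan is to prove both identities ($i=1,2$) along the classical lines of Slater, using a Bailey pair followed by Jacobi's triple product identity.

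\textbf{Step 1.} Reorganize the summand to fit the Bailey transform. Factor $q^{n^2+2(i-1)n}=(q^{2i-1})^n\,q^{n(n-1)}$; in base $Q:=q^2$ this exhibits the sum as
$$\sum_{n\geq 0}(q^{2i-1})^n\,Q^{\binom{n}{2}}\,\frac{(-q;Q)_n}{(Q;Q)_n},$$
which is a ${}_1\phi_1$ and is precisely the shape of the $\beta$-side of Bailey's lemma for a suitable Bailey pair relative to $a=q^{2i-1}$ in base $q^2$.

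\textbf{Step 2.} Substitute a Bailey pair $(\alpha_n,\beta_n)$ (taken from Slater's catalogue) whose $\beta_n$ matches the present summand. Bailey's lemma
$$\sum_{n\geq 0}a^n Q^{n^2}\beta_n=\frac{1}{(aQ;Q)_\infty}\sum_{n\geq 0}a^n Q^{n^2}\alpha_n$$
then collapses the left-hand side onto the $\alpha$-side, which for the chosen pair is a bilateral series whose nonzero terms all lie in a single residue class modulo $8$.

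\textbf{Step 3.} Apply Jacobi's triple product identity
$$\sum_{n\in\mathbb{Z}}(-z)^n q^{\binom{n}{2}}=(z;q)_\infty(q/z;q)_\infty(q;q)_\infty,$$
with $q\mapsto q^8$ and $z\mapsto q^{2i-1}$, to rewrite the theta-type series from Step 2 as an infinite product. After absorbing the overall $1/(aQ;Q)_\infty$ factor and simplifying, one arrives at the stated right-hand side $1/\bigl((q^{2i-1};q^8)_\infty(q^4;q^8)_\infty(q^{9-2i};q^8)_\infty\bigr)$.

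The principal obstacle is locating the correct Bailey pair: Slater's list contains several near-variants, and the precise matching of exponents so that the moduli $q^2$, $q^4$, and $q^8$ align requires careful book-keeping. A cleaner route that avoids the hunt for an explicit Bailey pair is to specialize Watson's $q$-analog of Whipple's theorem (the ${}_8\phi_7\to{}_4\phi_3$ transformation) and pass to a limit in which four of its parameters tend to infinity; the resulting terminating sum reduces to a ${}_2\phi_1$ that can be evaluated by the $q$-Gauss identity \eqref{qGauss}, and a final application of the triple product delivers the claim.
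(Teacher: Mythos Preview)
The paper does not supply a proof of this theorem; it is quoted as a classical result, with the proof credited to Slater \cite[(34) \& (36), p.~155]{Slater} and its earlier appearance in Ramanujan's notebooks noted. There is therefore no in-paper argument to compare your attempt against.

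Your sketch does follow the route Slater actually used---insert a Bailey pair, apply Bailey's lemma, and finish with the Jacobi triple product---so in spirit it is the ``right'' proof. The details, however, are not aligned. In Step~1 you extract $Q^{\binom{n}{2}}$, whereas the limiting form of Bailey's lemma you invoke in Step~2 carries $Q^{n^2}$; the mismatch means your identification of $a$ and of $\beta_n$ needs repair (in the standard treatment one works in base $q^2$ with $a=1$ for $i=1$ and $a=q^2$ for $i=2$, taking $\beta_n=(-q;q^2)_n/(q^2;q^2)_n$). The $\alpha$-side is not supported on a single residue class modulo~$8$; it is a genuine bilateral theta sum, and after the triple product one obtains $(q^{2i-1};q^8)_\infty(q^{9-2i};q^8)_\infty(q^8;q^8)_\infty$. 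The missing factor $(q^4;q^8)_\infty$ and the cancellation of $(q^8;q^8)_\infty$ against the prefactor $1/(aQ;Q)_\infty$ still have to be carried out explicitly---a routine but nontrivial simplification that Step~3 elides. Finally, the alternative you propose via Watson's ${}_8\phi_7$ transformation does not terminate in a $q$-Gauss evaluation; after the limits one still lands on a theta-type sum that requires the triple product. To make the sketch into a proof, write down the explicit Bailey pair (e.g., from Slater's list) and perform the product simplification in full.
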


These analytic identities in \eqref{Analytic_GG_identities}, though commonly referred as G\"ollnitz--Gordon identities, were proven a decade before G\"ollnitz and Gordon by Slater \cite[(34) \& (36), p. 155]{Slater}. It should be noted that both cases of \eqref{Analytic_GG_identities} were known to Ramanujan \cite[(1.7.11--12), p. 37]{LostNotebook_2} before any known proof emerged. 

For a lot of authors, including both G\"ollnitz and Gordon, the combinatorial interpretations of \eqref{Analytic_GG_identities} have been of interest. For $i=1$ or $2$, let $\G\G_i$ be the set of partitions into parts $\geq 2i-1$ with minimal difference between parts $\geq 2$ and no consecutive even numbers appear as parts. Let $C_{i,8}$ be the set of partitions into parts congruent to $\pm (2i-1)$, and 4 mod 8. Then Theorem~\ref{Analytic_GG_identities_THM} can be rewritten in its combinatorial form \cite{Gollnitz}, \cite{Gordon}:
\begin{theorem}[G\"ollnitz--Gordon, 1967 \& 1965]\label{Combinatorial_GG_identities_THM} For $i=1$ or $2$, the number of partitions of $n$ from $\G\G_i$ is equal to the number of partitions of $n$ from $C_{i,8}$.
\begin{equation}\label{GG_combinatorial_gf}\sum_{\pi\in\G\G_i} q^{|\pi|} = \sum_{\pi\in C_{i,8}} q^{|\pi|}.\end{equation}
\end{theorem}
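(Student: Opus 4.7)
The cleanest plan is to reduce Theorem~\ref{Combinatorial_GG_identities_THM} to the analytic identities of Theorem~\ref{Analytic_GG_identities_THM}, which are stated as given. For each $i\in\{1,2\}$ I would show that the right-hand side of \eqref{Analytic_GG_identities} is $\sum_{\pi\in C_{i,8}}q^{|\pi|}$ and the left-hand side is $\sum_{\pi\in\G\G_i}q^{|\pi|}$; combining the two interpretations with \eqref{Analytic_GG_identities} gives \eqref{GG_combinatorial_gf}.

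The product side is routine: expanding each factor $1/(1-q^m)$ as a geometric series shows that
$$\frac{1}{(q^{2i-1};q^8)_\infty(q^4;q^8)_\infty(q^{9-2i};q^8)_\infty}$$
is the generating function for partitions all of whose parts are congruent to $\pm(2i-1)$ or $4$ modulo $8$, i.e., for $C_{i,8}$. No obstacle here.

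The sum side requires a little more care. Given $\pi=(\lambda_1>\lambda_2>\cdots>\lambda_n)\in\G\G_i$, I would peel off the minimal staircase allowed by the gap condition by writing $\lambda_k=(2i-1)+2(n-k)+\mu_k$. Then $\mu_1\geq\mu_2\geq\cdots\geq\mu_n\geq 0$ is a partition with at most $n$ parts whose norm is $|\pi|-n^2-2(i-1)n$ (the staircase contributes $q^{n^2+2(i-1)n}$, matching the prefactor). The G\"ollnitz--Gordon parity restriction that no two consecutive even integers appear among the $\lambda_k$ translates, via the parity identity $\lambda_k\equiv 1+\mu_k\pmod{2}$, into the condition that whenever $\mu_k=\mu_{k+1}$ the common value is even; equivalently, the odd parts of $\mu$ are distinct. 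Thus the $\G\G_i$ partitions with exactly $n$ parts are in bijection with pairs (staircase of length $n$, partition $\mu$ with at most $n$ parts whose odd parts are distinct).

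The remaining step is to recognize $(-q;q^2)_n/(q^2;q^2)_n$ as the generating function for partitions with at most $n$ parts whose odd parts are distinct; summing on $n$ then gives $\sum_{\pi\in\G\G_i}q^{|\pi|}$ as the left side of \eqref{Analytic_GG_identities}. I would obtain this by splitting $\mu$ into its odd part $\mu^{(o)}$ (a partition into $k$ distinct odd parts, generated by $q^{k^2}/(q^2;q^2)_k$ after subtracting the base $1+3+\cdots+(2k-1)$) and its even part $\mu^{(e)}$ (a partition with at most $n-k$ even parts, generated by $1/(q^2;q^2)_{n-k}$), and then invoking the finite $q$-binomial expansion
$$(-q;q^2)_n=\sum_{k=0}^n q^{k^2}\begin{bmatrix}n\\k\end{bmatrix}_{q^2}=\sum_{k=0}^n\frac{q^{k^2}(q^2;q^2)_n}{(q^2;q^2)_k(q^2;q^2)_{n-k}}$$
to recombine the two factors. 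The main obstacle is precisely this last combinatorial bookkeeping: keeping track of the bound ``at most $n$ parts'' as the odd/even split varies with $k$, and verifying that the parity condition translates correctly into ``odd parts distinct.'' Once that is done, substitution into Theorem~\ref{Analytic_GG_identities_THM} immediately yields \eqref{GG_combinatorial_gf}.
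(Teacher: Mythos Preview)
Your proposal is correct. In the paper, Theorem~\ref{Combinatorial_GG_identities_THM} is not proved at all: it is stated as the classical combinatorial reformulation of Theorem~\ref{Analytic_GG_identities_THM}, with references to G\"ollnitz and Gordon, and the paper later simply quotes the key fact that $q^{n^2}(-q;q^2)_n/(q^2;q^2)_n$ generates $\G\G_1$-partitions with exactly $n$ parts from Alladi~\cite[(8.2)]{AlladiGollnitz}. Your argument supplies precisely the details behind that quotation---the staircase shift $\lambda_k=(2i-1)+2(n-k)+\mu_k$, the parity translation ``consecutive even parts forbidden'' $\Leftrightarrow$ ``odd parts of $\mu$ distinct,'' and the $q$-binomial recombination of the odd/even split of $\mu$---so your route is exactly the one the paper implicitly relies on but leaves to the literature.
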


We now present some analytical identities that will later be interpreted in terms of the G\"ollnitz--Gordon type partitions. This discussion will yield to the first set of weighted partition identities of this paper.

\begin{theorem} \label{Analytic_Half_weight_THM}
\begin{align}\label{Analytic_Half_weight_1}\sum_{n \geq 0} \frac{q^{n^2}(-q;q^2)_n}{(q^2;q^2)^2_n} &= \frac{(-q;q^2)_\infty}{(q^2;q^2)_\infty},\\
\label{Analytic_Half_weight_2}\sum_{n \geq 0} \frac{q^{n^2+2n}(-q;q^2)_n}{(q^2;q^2)^2_n} &= \frac{(-q;q^2)_\infty}{(q^2;q^2)_\infty} \sum_{n\geq 0} \frac{(-1)^n q^{n^2+n}}{(-q;q^2)_{n+1}}=\frac{(-q;q^2)_\infty}{(q^2;q^2)_\infty}\sum_{j\geq 0} q^{3j^2+2j}(1-q^{2j+1}).
\end{align}
\end{theorem}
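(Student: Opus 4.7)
The plan is to treat both identities as $b\to\infty$ limits of a $_2\phi_1$ series in base $q^2$. Using the elementary limit $b^{-n}(b;q^2)_n\to(-1)^n q^{n^2-n}$, one checks that
\[
\sum_{n \geq 0} \frac{q^{n^2+kn}(-q;q^2)_n}{(q^2;q^2)_n^2} \;=\; \lim_{b\to\infty}\,{}_2\phi_1\!\left(\genfrac{}{}{0pt}{}{-q,\ b}{q^2};q^2,-q^{k+1}/b\right),
\]
with $k=0$ giving \eqref{Analytic_Half_weight_1} and $k=2$ giving the left side of \eqref{Analytic_Half_weight_2}. The problem thus reduces to evaluating each $_2\phi_1$ limit.

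For \eqref{Analytic_Half_weight_1} the choice $z=-q/b$ is exactly the balanced value $c/(ab)=q^2/((-q)b)$, so the $q$-Gauss summation \eqref{qGauss} applies directly. The two $b$-dependent factors $(q^2/b;q^2)_\infty$ and $(-q/b;q^2)_\infty$ both tend to $1$ as $b\to\infty$, leaving the product on the right of \eqref{Analytic_Half_weight_1}.

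For \eqref{Analytic_Half_weight_2} the value $z=-q^3/b$ does not satisfy the $q$-Gauss balance, so I would first apply Heine's transformation \eqref{Heine} with $(a,b,c,z)=(-q,b,q^2,-q^3/b)$. One computes $bz=-q^3$, $abz/c=q^2$, and $c/b=q^2/b$, so Heine converts the series into a multiple of ${}_2\phi_1(q^2,b;-q^3;q^2,q^2/b)$, in which $(q^2;q^2)_n$ cancels between numerator and denominator. Passing to the limit $b\to\infty$ produces
\[
\frac{(-q^3;q^2)_\infty}{(q^2;q^2)_\infty}\sum_{n\geq 0}\frac{(-1)^n q^{n^2+n}}{(-q^3;q^2)_n},
\]
and the standard relation $(-q^3;q^2)_m=(-q;q^2)_{m+1}/(1+q)$ (valid both for $m$ finite and $m=\infty$) rearranges this into the middle expression of \eqref{Analytic_Half_weight_2}.

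The final equality in \eqref{Analytic_Half_weight_2} is the Fine-flavored false theta step. I would derive it from the Rogers--Fine identity (an iterated consequence of \eqref{Heine}) applied in base $q^2$ with parameters $(\alpha,\beta,\tau)=(a,-q^3,q^2/a)$, followed by $a\to\infty$. With these choices, the Rogers--Fine right side acquires the factors $(\alpha\tau q/\beta;q^2)_n=(-q;q^2)_n$ and $1-\alpha\tau q^{2n}=1-q^{4n+2}=(1-q^{2n+1})(1+q^{2n+1})$; combined with the collapse $(-q;q^2)_n/(-q^3;q^2)_n=(1+q)/(1+q^{2n+1})$, everything simplifies to $(1+q)\sum_n q^{3n^2+2n}(1-q^{2n+1})$. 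Meanwhile the Rogers--Fine left side, after applying $(-q^3;q^2)_n=(-q;q^2)_{n+1}/(1+q)$ again, collapses to $(1+q)\sum_n (-1)^n q^{n^2+n}/(-q;q^2)_{n+1}$. Dividing by $1+q$ yields the desired false theta identity. The main technical obstacle is precisely this coordination: choosing $(\alpha,\beta,\tau)$ so that the two $(-q^{\cdots};q^2)$ products appear in exactly the right ratio for the Rogers--Fine factor $1-\alpha\tau q^{2n}$ to split into the $1-q^{2j+1}$ and $1+q^{2j+1}$ pieces needed to cancel with $(-q^3;q^2)_n$.
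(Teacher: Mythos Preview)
Your proof is correct and follows essentially the same route as the paper: both realize the sums as limits of a ${}_2\phi_1$, invoke the $q$-Gauss summation \eqref{qGauss} for \eqref{Analytic_Half_weight_1}, apply the Heine transformation \eqref{Heine} for the first equality in \eqref{Analytic_Half_weight_2}, and appeal to a Fine/Rogers false-theta identity for the second. The only substantive difference is that the paper simply cites Fine and Rogers for the last step, whereas you supply a self-contained Rogers--Fine derivation (and you apply Heine with the roles of the two numerator parameters swapped relative to the paper's choice $a=\rho q^2$, which is immaterial).
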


\begin{proof}With the definition \eqref{2Phi1}, Theorem~\ref{Analytic_Half_weight_THM} can be proven easily. The left-hand side of \eqref{Analytic_Half_weight_1} and \eqref{Analytic_Half_weight_2} can be rewritten as \[\lim_{\rho\rightarrow\infty}{}_2\phi_1 \left(\genfrac{}{}{0pt}{}{-q,\ \rho}{q^2};q^2,-\frac{q}{\rho} \right)\text{  and  }\lim_{\rho\rightarrow\infty}{}_2\phi_1 \left(\genfrac{}{}{0pt}{}{-q,\ \rho q^2}{q^2};q^2,-\frac{q}{\rho} \right),\text{ respectively.}\]
Then it is easy to show that \eqref{Analytic_Half_weight_1} is a limiting case of $q$-Gauss summation \eqref{qGauss}. An equivalent form of the identity \eqref{Analytic_Half_weight_1} is also present in Ramanujan's lost notebooks \cite[4.2.6, p. 84]{LostNotebook_2}.

The first equality of \eqref{Analytic_Half_weight_2} is an application of the Heine transformation \eqref{Heine} with $a=\rho q^2$, and the second equality is due to Fine \cite[(26.91--97), p. 62]{FineBook} with $q\mapsto -q$, and Rogers \cite[(4), p. 333]{Rogers} with $q\mapsto -q$. Another equivalent proof and an alternative representation of the second equality in \eqref{Analytic_Half_weight_2} is present in Ramanujan's lost notebooks \cite[\S 9.5]{LostNotebook_1}. 
\end{proof}

Similar to the situation in Theorem~\ref{Analytic_GG_identities_THM}, analytic identities \eqref{Analytic_Half_weight_1} and \eqref{Analytic_Half_weight_2} can be interpreted combinatorially. In fact, the interpretation of \eqref{Analytic_Half_weight_1} was discussed in \cite{Alladi_Lebesgue}. For the sake of completeness we will slightly paraphrase this discussion below. 

We can easily interpret the product on the right-hand side of \eqref{Analytic_Half_weight_1}. The expression $(-q;q^2)_\infty$ is the generating function for the number of partitions into distinct odd parts and $1/(q^2;q^2)_\infty$ is the generating function for the number of partitions into even parts. These two generating functions' product is the generating function for the number of partitions with distinct odd parts (even parts may be repeating). This is clear as the parity of a part in a partition completely identifies which generating function it is coming from.

The generating function interpretation of the left-hand side of \eqref{Analytic_Half_weight_1} needs us to identify weights on partitions. For a positive integer $n$, the reciprocal of the $q$-Pochhammer symbol \begin{equation}\label{even_pts_less_n}\frac{1}{(q^2;q^2)_n}\end{equation} is the generating function for the number of partitions into $\leq n$ even parts. The expression 
\begin{equation}\label{inner_GG_GF}\frac{q^{n^2}(-q;q^2)_n}{(q^2;q^2)_n}\end{equation} 
can be interpreted as the generating function for the number of partitions into exactly $n$ parts from $\G\G_1$, \cite[(8.2), p. 173]{AlladiGollnitz}. We can represent the partitions counted by \eqref{inner_GG_GF} as 2-modular graphs. There are four possible patterns that can appear at the end of consecutive parts of these 2-modular Ferrers diagrams. All these possible endings of consecutive parts $\lambda_i$ and $\lambda_{i+1}$, where $i<\nu(\pi)$ of a partition $\pi$, are demonstrated in Table~\ref{Tips of parts}.

\begin{center}
\begin{table}[htb]\caption{Ends of consecutive parts of G\"ollnitz-Gordon partitions}\label{Tips of parts}
\begin{tabular}{cc}
\begin{tikzpicture}[line cap=round,line join=round,>=triangle 45,x=0.16666cm,y=0.16666cm]
\clip(-4,0.33) rectangle (32,11);
\draw (1,7)-- (4,7);
\draw (4,7)-- (4,1);
\draw (4,1)-- (1,1);
\draw (1,1)-- (1,7);
\draw (1,4)-- (4,4);
\draw (9,4)-- (12,4);
\draw (12,4)-- (12,7);
\draw (12,7)-- (9,7);
\draw (9,7)-- (9,4);
\draw (9,4)-- (9,1);
\draw (9,1)-- (12,1);
\draw (12,4)-- (12,1);
\draw (13,4)-- (16,4);
\draw (16,4)-- (16,7);
\draw (16,7)-- (13,7);
\draw (13,7)-- (13,4);
\draw (21,7)-- (21,4);
\draw (21,4)-- (24,4);
\draw (24,4)-- (24,7);
\draw (24,7)-- (21,7);
\draw (25,7)-- (28,7);
\draw (28,7)-- (28,4);
\draw (28,4)-- (25,4);
\draw (25,4)-- (25,7);
\draw (2.5,5.5) node[anchor=center] {2};
\draw (2.5,2.5) node[anchor=center] {2};
\draw (10.5,5.5) node[anchor=center] {2};
\draw (14.5,5.5) node[anchor=center] {2};
\draw (22.5,5.5) node[anchor=center] {2};
\draw (26.5,5.5) node[anchor=center] {1};
\draw (10.5,2.5) node[anchor=center] {1};
\draw (6.5,4) node[anchor=center] {$ \dots $};
\draw (19,5.5) node[anchor=center] {$ \dots $};
\draw (12.66,7.68)-- (12.66,8.35);
\draw (24.33,7.66)-- (24.33,8.33);
\draw (12.66,8)-- (24.33,8.01);
\draw (18.4,9.5)  node[anchor=center] {$ \frac{\lambda_i - \lambda_{i+1}-2}{2} \geq 0 $};
\draw (-1,5.5) node[anchor=center] {$ \lambda_i $};
\draw (-1,2.5) node[anchor=center] {$ \lambda_{i+1} $};
\draw (28,7)-- (28,4);
\draw (28,4)-- (28,7);
\end{tikzpicture},&
\begin{tikzpicture}[line cap=round,line join=round,>=triangle 45,x=0.16666cm,y=0.16666cm]
\clip(-4,0.33) rectangle (32.33,11);
\draw (1,7)-- (4,7);
\draw (4,7)-- (4,1);
\draw (4,1)-- (1,1);
\draw (1,1)-- (1,7);
\draw (1,4)-- (4,4);
\draw (9,4)-- (12,4);
\draw (12,4)-- (12,7);
\draw (12,7)-- (9,7);
\draw (9,7)-- (9,4);
\draw (9,4)-- (9,1);
\draw (9,1)-- (12,1);
\draw (12,4)-- (12,1);
\draw (13,4)-- (16,4);
\draw (16,4)-- (16,7);
\draw (16,7)-- (13,7);
\draw (13,7)-- (13,4);
\draw (21,7)-- (21,4);
\draw (21,4)-- (24,4);
\draw (24,4)-- (24,7);
\draw (24,7)-- (21,7);
\draw (25,7)-- (28,7);
\draw (28,7)-- (28,4);
\draw (28,4)-- (25,4);
\draw (25,4)-- (25,7);
\draw (2.5,5.5) node[anchor=center] {2};
\draw (2.5,2.5) node[anchor=center] {2};
\draw (10.5,5.5) node[anchor=center] {2};
\draw (14.5,5.5) node[anchor=center] {2};
\draw (22.5,5.5) node[anchor=center] {2};
\draw (26.5,5.5) node[anchor=center] {2};
\draw (10.5,2.5) node[anchor=center] {2};
\draw (6.5,4) node[anchor=center] {$ \dots $};
\draw (19,5.5) node[anchor=center] {$ \dots $};
\draw (12.66,7.68)-- (12.66,8.35);
\draw (24.33,7.66)-- (24.33,8.33);
\draw (12.66,8)-- (24.33,8.01);
\draw (18.4,9.5) node[anchor=center] {$ \frac{\lambda_i - \lambda_{i+1}-3}{2}\geq 0  $};
\draw (-1,5.5) node[anchor=center] {$ \lambda_i $};
\draw (-1,2.5) node[anchor=center] {$ \lambda_{i+1} $};
\draw (31,7)-- (31,4);
\draw (31,4)-- (28,4);
\draw (31,7)-- (28,7);
\draw (28,7)-- (28,4);
\draw (28,4)-- (28,7);
\draw (29.5,5.5) node[anchor=center] {1};
\end{tikzpicture},\\
\begin{tikzpicture}[line cap=round,line join=round,>=triangle 45,x=0.16666cm,y=0.16666cm]
\clip(-4,0.33) rectangle (30,11);
\draw (1,7)-- (4,7);
\draw (4,7)-- (4,1);
\draw (4,1)-- (1,1);
\draw (1,1)-- (1,7);
\draw (1,4)-- (4,4);
\draw (9,4)-- (12,4);
\draw (12,4)-- (12,7);
\draw (12,7)-- (9,7);
\draw (9,7)-- (9,4);
\draw (9,4)-- (9,1);
\draw (9,1)-- (12,1);
\draw (12,4)-- (12,1);
\draw (13,4)-- (16,4);
\draw (16,4)-- (16,7);
\draw (16,7)-- (13,7);
\draw (13,7)-- (13,4);
\draw (21,7)-- (21,4);
\draw (21,4)-- (24,4);
\draw (24,4)-- (24,7);
\draw (24,7)-- (21,7);
\draw (25,7)-- (28,7);
\draw (28,7)-- (28,4);
\draw (28,4)-- (25,4);
\draw (25,4)-- (25,7);
\draw (2.5,5.5) node[anchor=center] {2};
\draw (2.5,2.5) node[anchor=center] {2};
\draw (10.5,5.5) node[anchor=center] {2};
\draw (14.5,5.5) node[anchor=center] {2};
\draw (22.5,5.5) node[anchor=center] {2};
\draw (26.5,5.5) node[anchor=center] {2};
\draw (10.5,2.5) node[anchor=center] {1};
\draw (6.5,4) node[anchor=center] {$ \dots $};
\draw (19,5.5) node[anchor=center] {$ \dots $};
\draw (12.66,7.68)-- (12.66,8.35);
\draw (24.33,7.66)-- (24.33,8.33);
\draw (12.66,8)-- (24.33,8.01);
\draw (18.4,9.5) node[anchor=center] {$ \frac{\lambda_i - \lambda_{i+1}-3}{2} \geq 0 $};
\draw (-1,5.5) node[anchor=center] {$ \lambda_i $};
\draw (-1,2.5) node[anchor=center] {$ \lambda_{i+1} $};
\draw (28,7)-- (28,4);
\draw (28,4)-- (28,7);
\end{tikzpicture},&
\begin{tikzpicture}[line cap=round,line join=round,>=triangle 45,x=0.16666cm,y=0.16666cm]
\clip(-4,0.33) rectangle (32,11);
\draw (1,7)-- (4,7);
\draw (4,7)-- (4,1);
\draw (4,1)-- (1,1);
\draw (1,1)-- (1,7);
\draw (1,4)-- (4,4);
\draw (9,4)-- (12,4);
\draw (12,4)-- (12,7);
\draw (12,7)-- (9,7);
\draw (9,7)-- (9,4);
\draw (9,4)-- (9,1);
\draw (9,1)-- (12,1);
\draw (12,4)-- (12,1);
\draw (13,4)-- (16,4);
\draw (16,4)-- (16,7);
\draw (16,7)-- (13,7);
\draw (13,7)-- (13,4);
\draw (21,7)-- (21,4);
\draw (21,4)-- (24,4);
\draw (24,4)-- (24,7);
\draw (24,7)-- (21,7);
\draw (25,7)-- (28,7);
\draw (28,7)-- (28,4);
\draw (28,4)-- (25,4);
\draw (25,4)-- (25,7);
\draw (2.5,5.5) node[anchor=center] {2};
\draw (2.5,2.5) node[anchor=center] {2};
\draw (10.5,5.5) node[anchor=center] {2};
\draw (14.5,5.5) node[anchor=center] {2};
\draw (22.5,5.5) node[anchor=center] {2};
\draw (26.5,5.5) node[anchor=center] {2};
\draw (10.5,2.5) node[anchor=center] {2};
\draw (6.5,4) node[anchor=center] {$ \dots $};
\draw (19,5.5) node[anchor=center] {$ \dots $};
\draw (12.66,7.68)-- (12.66,8.35);
\draw (24.33,7.66)-- (24.33,8.33);
\draw (12.66,8)-- (24.33,8.01);
\draw (18.4,9.5)  node[anchor=center] {$ \frac{\lambda_i - \lambda_{i+1}-4}{2}\geq 0  $};
\draw (-1,5.5) node[anchor=center] {$ \lambda_i $};
\draw (-1,2.5) node[anchor=center] {$ \lambda_{i+1} $};
\draw (31,7)-- (31,4);
\draw (31,4)-- (28,4);
\draw (31,7)-- (28,7);
\draw (28,7)-- (28,4);
\draw (28,4)-- (28,7);
\draw (29.5,5.5) node[anchor=center] {2};
\end{tikzpicture}.
\end{tabular}
\end{table}
\end{center}

The labelled gaps on Table~\ref{Tips of parts} are the number of non-essential number of boxes between consecutive parts for the partition to be in $\G\G_1$. These differences can be equal to zero. In general, the number of the non-essential boxes of the 2-modular Ferrers diagram for a partition in $\G\G_1$ is given by the formula \begin{equation}\label{Num_of_non_essential_boxes}
\frac{\lambda_i - \lambda_{i+1}-\delta_{\lambda_i,e}-\delta_{\lambda_{i+1},e}}{2} - 1,
\end{equation}
where \begin{equation}\label{delta_even}
\delta_{n,e} := \left\{ \begin{array}{ll} 1, & \text{if }n\text{ is even,}\\ 0, & \text{otherwise.}\end{array} \right. 
\end{equation} Later we will need \begin{equation}\label{delta_odd}
\delta_{n,o} := 1-\delta_{n,e}. 
\end{equation}

The product of \eqref{even_pts_less_n} and \eqref{inner_GG_GF} is the generating function for the number of partitions from $\G\G_1$ into exactly $n$ parts where the non-essential boxes in their 2-modular Ferrers diagram representation come in two colors. Let $\pi'$ be a partition counted by \eqref{even_pts_less_n} in one color and let $\pi^*$ be a partition counted by \eqref{inner_GG_GF} in another color. Then we insert columns of the 2-modular Ferrers diagram representation of $\pi'$ in the 2-modular Ferrers diagram representation of $\pi^*$. In doing so, we insert the different colored columns all the way left those columns can be inserted, without violating the definition of 2-modular Ferrers diagrams. One example of the insertion of this type for $n= 4$ is presented in Table~\ref{Table_insertion example}.

\begin{table}[htb]\caption{Insertion of the columns of $\pi'=(6,4,4,4)$ in $\pi^* = (12,8,3,1)\in \G\G_1$.}\label{Table_insertion example}
\begin{center}
\definecolor{ffqqqq}{rgb}{0,1,0}
\definecolor{qqqqff}{rgb}{1,1,1}
\definecolor{cqcqcq}{rgb}{0.75,0.75,0.75}
\begin{tikzpicture}[line cap=round,line join=round,>=triangle 45,x=0.5cm,y=0.5cm]
\clip(1.5,1.5) rectangle (24.5,7.3);
\fill[line width=0.pt,color=ffqqqq,fill=ffqqqq,fill opacity=0.4] (10.,6.) -- (13.,6.) -- (13.,5.) -- (12.,5.) -- (12.,2.) -- (10.,2.) -- cycle;
\fill[line width=0.pt,color=ffqqqq,fill=ffqqqq,fill opacity=0.4] (15.,6.) -- (17.,6.) -- (17.,2.) -- (15.,2.) -- cycle;
\fill[line width=0.pt,color=ffqqqq,fill=ffqqqq,fill opacity=0.4] (21.,6.) -- (22.,6.) -- (22.,5.) -- (21.,5.) -- cycle;
\draw [line width=1pt] (3,2)-- (3,6);
\draw [line width=1pt] (2,6)-- (2,2);
\draw [line width=1pt] (2,2)-- (3,2);
\draw [line width=1pt] (2,3)-- (4,3);
\draw [line width=1pt] (4,3)-- (4,6);
\draw [line width=1pt] (6,4)-- (2,4);
\draw [line width=1pt] (6,4)-- (6,6);
\draw [line width=1pt] (8,6)-- (8,5);
\draw [line width=1pt] (7,6)-- (7,5);
\draw [line width=1pt] (5,6)-- (5,4);
\draw [line width=1pt] (10,6)-- (10,2);
\draw [line width=1pt] (10,2)-- (12,2);
\draw [line width=1pt] (10,6)-- (13,6);
\draw [line width=1pt] (13,6)-- (13,5);
\draw [line width=1pt] (13,5)-- (10,5);
\draw [line width=1pt] (12,2)-- (12,6);
\draw [line width=1pt] (11,2)-- (11,6);
\draw [line width=1pt] (12,4)-- (10,4);
\draw [line width=1pt] (12,3)-- (10,3);
\draw [line width=1pt] (2,6)-- (8,6);
\draw [line width=1pt] (8,5)-- (2,5);
\draw [line width=1pt] (15,2)-- (18,2);
\draw [line width=1pt] (18,6)-- (18,2);
\draw [line width=1pt] (15,6)-- (15,2);
\draw [line width=1pt] (15,6)-- (24,6);
\draw [line width=1pt] (24,6)-- (24,5);
\draw [line width=1pt] (24,5)-- (15,5);
\draw [line width=1pt] (21,4)-- (21,6);
\draw [line width=1pt] (21,4)-- (15,4);
\draw [line width=1pt] (19,3)-- (19,6);
\draw [line width=1pt] (19,3)-- (15,3);
\draw [line width=1pt] (16,6)-- (16,2);
\draw [line width=1pt] (17,6)-- (17,2);
\draw [line width=1pt] (20,6)-- (20,4);
\draw [line width=1pt] (22,6)-- (22,5);
\draw [line width=1pt] (23,5)-- (23,6);
\draw (2.5,5.5) node[anchor=center] {2};
\draw (3.5,5.5) node[anchor=center] {2};
\draw (4.5,5.5) node[anchor=center] {2};
\draw (5.5,5.5) node[anchor=center] {2};
\draw (5,6.6) node[anchor=center] {$\pi^*$};
\draw (6.5,5.5) node[anchor=center] {2};
\draw (7.5,5.5) node[anchor=center] {2};
\draw (2.5,4.5) node[anchor=center] {2};
\draw (3.5,4.5) node[anchor=center] {2};
\draw (4.5,4.5) node[anchor=center] {2};
\draw (5.5,4.5) node[anchor=center] {2};
\draw (2.5,3.5) node[anchor=center] {2};
\draw (3.5,3.5) node[anchor=center] {1};
\draw (10.5,5.5) node[anchor=center] {2};
\draw (11.5,6.6) node[anchor=center] {$\pi'$};
\draw (11.5,5.5) node[anchor=center] {2};
\draw (12.5,5.5) node[anchor=center] {2};
\draw (10.5,4.5) node[anchor=center] {2};
\draw (11.5,4.5) node[anchor=center] {2};
\draw (10.5,3.5) node[anchor=center] {2};
\draw (11.5,3.5) node[anchor=center] {2};
\draw (10.5,2.5) node[anchor=center] {2};
\draw (11.5,2.5) node[anchor=center] {2};
\draw (15.5,5.5) node[anchor=center] {2};
\draw (16.5,5.5) node[anchor=center] {2};
\draw (15.5,4.5) node[anchor=center] {2};
\draw (16.5,4.5) node[anchor=center] {2};
\draw (15.5,3.5) node[anchor=center] {2};
\draw (16.5,3.5) node[anchor=center] {2};
\draw (15.5,2.5) node[anchor=center] {2};
\draw (16.5,2.5) node[anchor=center] {2};
\draw (17.5,3.5) node[anchor=center] {2};
\draw (18.5,3.5) node[anchor=center] {1};
\draw (20.5,4.5) node[anchor=center] {2};
\draw (19.5,4.5) node[anchor=center] {2};
\draw (20.5,5.5) node[anchor=center] {2};
\draw (19.5,5.5) node[anchor=center] {2};
\draw (18.5,5.5) node[anchor=center] {2};
\draw (18.5,4.5) node[anchor=center] {2};
\draw (17.5,4.5) node[anchor=center] {2};
\draw (17.5,5.5) node[anchor=center] {2};
\draw (21.5,5.5) node[anchor=center] {2};
\draw (22.5,5.5) node[anchor=center] {2};
\draw (23.5,5.5) node[anchor=center] {2};
\draw (2.5,2.5) node[anchor=center] {1};
\draw (17.5,2.5) node[anchor=center] {1};
\draw (9,4) node[anchor=center] {,};
\draw (14,4) node[anchor=center] {$ \rightarrow $};
\end{tikzpicture}\\
The inserted columns from $\pi'$ are all non-essential for the outcome partition to lie in $\G\G_1$, though those are not the only non-essential columns.
\end{center}
\end{table}

This insertion changes the number of non-essential boxes of a partition  $\pi\in\G\G_1$, and it does not effect any essential structure of the 2-modular Ferrers diagrams. There are a total of \begin{equation}\label{Raw_half_weight}
\frac{\lambda_i - \lambda_{i+1}-\delta_{\lambda_i,e}-\delta_{\lambda_{i+1},e}}{2} 
\end{equation} many different possibilities for the coloration of the non-essential boxes that appear from the part $\lambda_i$ to $\lambda_{i+1}$. Similarly, there are \[
\frac{\lambda_{n}+\delta_{\lambda_{n},o}}{2}
\] many coloration possibilities for a the smallest part of a partition that gets counted by the summand of \eqref{Analytic_Half_weight_1}, where $\delta_{n,o}$ is defined in \eqref{delta_odd}.

Hence, combining all the possible number of colorations, there are \begin{equation}\label{Half_Weight_1}
\omega_1(\pi) := \frac{\lambda_{\nu(\pi)}+\delta_{\lambda_{\nu(\pi)},o}}{2}\cdot\prod_{i=1}^{\nu(\pi)-1} \frac{\lambda_i - \lambda_{i+1}-\delta_{\lambda_i,e}-\delta_{\lambda_{i+1},e}}{2}
\end{equation} total number of colorations of a partition $\pi\in\G\G_1$. The far right partition in Table~\ref{Table_insertion example} is one of the possible colorations of the partition $(18,12,7,5)$, and the total number of colorations via \eqref{Half_Weight_1} is $\omega_1(18,12,7,5) = 3\cdot2\cdot 2\cdot 1= 12$.

The above discussion yields the weighted partition identity:
\begin{theorem}[Alladi, 2012]\label{GG_1_combinatorial_theorem} \begin{equation}
\sum_{\pi\in\G\G_1} \omega_1(\pi) q^{|\pi|} = \sum_{\pi\in\P_{do}} q^{|\pi|},
\end{equation} where $\omega_1$ is defined as in \eqref{Half_Weight_1} and $\P_{do}$ is the set of partitions with distinct odd parts.
\end{theorem}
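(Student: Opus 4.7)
The plan is to read the analytic identity~\eqref{Analytic_Half_weight_1} combinatorially and extract the weighted identity directly, with the weight $\omega_1$ appearing as the number of ways to color the non-essential boxes of a 2-modular Ferrers diagram.

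First I would interpret the right-hand side $(-q;q^2)_\infty/(q^2;q^2)_\infty$. The factor $(-q;q^2)_\infty$ generates partitions into distinct odd parts and $1/(q^2;q^2)_\infty$ generates partitions into arbitrary even parts; since the parts of any partition can be sorted by parity, this product is exactly $\sum_{\pi\in\P_{do}} q^{|\pi|}$, as was noted right after Theorem~\ref{Analytic_Half_weight_THM}.

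For the left-hand side I would decompose each summand $q^{n^2}(-q;q^2)_n/(q^2;q^2)_n^2$ as a product of two generating functions. The factor $q^{n^2}(-q;q^2)_n/(q^2;q^2)_n$ enumerates $\G\G_1$-partitions with exactly $n$ parts (in their 2-modular Ferrers realization), while the remaining $1/(q^2;q^2)_n$ enumerates partitions into at most $n$ even parts, which I view as the heights of $n$ (possibly empty) columns of $2$'s to be inserted. Given a $\G\G_1$-partition $\pi^*$ with $n$ parts and an auxiliary partition $\pi'$ into at most $n$ even parts, I would build a colored 2-modular diagram by inserting the columns of $\pi'$ into $\pi^*$ as far left as the 2-modular admissibility rules permit, coloring the inserted columns so they can be distinguished. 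The resulting shape is the 2-modular diagram of some $\pi\in\G\G_1$, and erasing the color recovers $(\pi^*,\pi')$ uniquely, giving a bijection between colored pairs and two-colored $\G\G_1$ diagrams.

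To finish, I would count, for a fixed $\pi\in\G\G_1$, how many colorations arise. The case analysis of Table~\ref{Tips of parts} shows that between consecutive parts $\lambda_i,\lambda_{i+1}$ the number of admissible insertion slots is exactly the quantity~\eqref{Raw_half_weight}, namely $(\lambda_i-\lambda_{i+1}-\delta_{\lambda_i,e}-\delta_{\lambda_{i+1},e})/2$, and the bottom-row slot count is $(\lambda_{\nu(\pi)}+\delta_{\lambda_{\nu(\pi)},o})/2$. These choices being independent, multiplying them gives precisely $\omega_1(\pi)$ as in~\eqref{Half_Weight_1}, and summing over $\pi\in\G\G_1$ reproduces the left-hand side of~\eqref{Analytic_Half_weight_1}. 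The main obstacle, really the only delicate step, is verifying that the four cases displayed in Table~\ref{Tips of parts} all collapse to the single uniform formula~\eqref{Raw_half_weight} through the parity indicators $\delta_{\cdot,e}$, and checking the analogous count for the smallest part; once this parity bookkeeping is done no further analytic work is required.
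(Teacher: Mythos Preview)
Your proposal is correct and follows essentially the same route as the paper: interpret the right side of \eqref{Analytic_Half_weight_1} as the generating function for $\P_{do}$, split the summand on the left as \eqref{even_pts_less_n} times \eqref{inner_GG_GF}, perform the column insertion into the 2-modular diagram, and use the case analysis of Table~\ref{Tips of parts} to obtain the factors \eqref{Raw_half_weight} and $(\lambda_{\nu(\pi)}+\delta_{\lambda_{\nu(\pi)},o})/2$, whose product is $\omega_1(\pi)$. One small wording issue: what you call the ``number of admissible insertion slots'' is really the number of possible colorations (i.e., one more than the count \eqref{Num_of_non_essential_boxes} of non-essential boxes), but since you cite the correct formula~\eqref{Raw_half_weight} this does not affect the argument.
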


Theorem~\ref{GG_1_combinatorial_theorem} is essentially \cite[Theorem~3]{Alladi_Lebesgue} with $a=b=1$ with minor corrections for the weight associated with the smallest part of G\"ollnitz--Gordon partitions. The set of partitions $\P_{do}$, partitions with distinct odd parts, has also been studied in \cite{AlladiAnnalsOfComb} and \cite{BerkovichGarvanCrank}. We give an example of Theorem~\ref{GG_1_combinatorial_theorem} in Table~\ref{GG1_table}.

\begin{table}[htb]
\caption{Example of Theorem~\ref{GG_1_combinatorial_theorem} with $|\pi|=12$.}\label{GG1_table}
\begin{center}
$\begin{array}{cc||c|c}
\pi\in\G\G_1	& \omega_1 	& \pi\in\P_{do} 	& \pi\in\P_{do}\\
[-2ex]& & &\\
(12)						&	6		&			(12)			&		(6,4,2)			\\
(11,1)						&	5		&			(11,1)			&		(6,3,2,1)		\\
(10,2)						&	3		&			(10,2)	 		&		(6,2,2,2)		\\
(9,3)						&	6		&			(9,3)			&		(5,4,3)		\\
(8,4)						&	2		&			(9,2,1)			&		(5,4,2,1)		\\
(8,3,1)						&	2		&			(8,4)			&		(5,3,2,2)		\\
(7,5)						&	3		&			(8,3,1)			&		(5,2,2,2,1)		\\
(7,4,1)						&	1		&			(8,2,2)			&		(4,4,4)		\\
							&			&			(7,5)			&		(4,4,3,1)	\\
							&			&			(7,4,1)			&		(4,4,2,2)			\\
							&			&			(7,3,2)			&		(4,3,2,2,1)		\\		
							&			&			(7,2,2,1)		&		(4,2,2,2,2)		\\		
							&			&			(6,6)			&		(3,2,2,2,2,1)		\\
							&			&			(6,5,1)			&		(2,2,2,2,2,2)		\\		\end{array}$
		
\vspace{2mm}					
The summation of all $\omega_{1}(\pi)$ values for $\pi\in \G\G_1$ with $|\pi|=12$ equals $28$ as the number of partitions from $\P_{do}$ with the same norm.
\end{center}
\end{table}

Formally, let $\lambda_{\nu(\pi)+1} := 0$ for a partition $\pi$. Following the same construction \eqref{even_pts_less_n}--\eqref{Half_Weight_1} step-by-step for the $\G\G_2$ type partitions, we see that the left-hand side of \eqref{Analytic_Half_weight_2} can be interpreted as a weighted generating function for the number of partitions \[\sum_{\pi\in\G\G_2} \omega_2(\pi) q^{|\pi|}, \] where \begin{equation}\label{weight_2}\omega_2(\pi) := \prod_{i=1}^{\nu(\pi)} \frac{\lambda_i - \lambda_{i+1}-\delta_{\lambda_i,e}-\delta_{\lambda_{i+1},e}}{2}.\end{equation} 
This weight $\omega_2$, unlike $\omega_1$, is uniform on every pair of consecutive parts with our customary definition $\lambda_{\nu(\pi)+1} = 0$.

We rewrite the sum in the middle term of \eqref{Analytic_Half_weight_2} as \begin{equation}\label{EvenODDSplit}\frac{(-q;q^2)_\infty}{(q^2;q^2)_\infty}\sum_{n\geq 0} \frac{(-1)^n q^{n^2+n}}{(-q;q^2)_{n+1}} = \frac{(-q;q^2)_\infty}{(q^2;q^2)_\infty}\left(\sum_{j\geq 0} \frac{q^{4j^2 +2j}(1-q^{4j+2})}{(-q;q^2)_{2j+1}} + \sum_{j\geq 1} \frac{q^{4j^2 + 2j-1}}{(-q;q^2)_{2j}}\right).\end{equation} 
Clearly \eqref{EvenODDSplit} amounts to 
\begin{align}
\label{EQN1}\sum_{j\geq 0} \frac{q^{4j^2+2j}}{(-q;q^2)_{2j+1}} - \sum_{j\geq 1} \frac{q^{4j^2-2j}}{(-q;q^2)_{2j}} &=\sum_{j\geq 0} \frac{q^{4j^2 +2j}(1-q^{4j+2})}{(-q;q^2)_{2j+1}} + \sum_{j\geq 1} \frac{q^{4j^2 + 2j-1}}{(-q;q^2)_{2j}},\\
\intertext{where we split the sum on the left of \eqref{EvenODDSplit} into two sub-sums according to the parity of the summation variable and changing the variable name $n$ to $j$. After cancellations, \eqref{EQN1} turns into}
\label{EQN2}\sum_{j\geq 1} \frac{q^{4j^2-2j}(1+q^{4j-1})}{(-q;q^2)_{2j}} &=\sum_{j\geq 0} \frac{q^{4j^2 +6j+2}}{(-q;q^2)_{2j+1}}.
\end{align}
The equation \eqref{EQN2} can be easily established by simplifying the fraction on the left and shifting the summation variable $j\mapsto j+1$. 

Let $\P_{rdo}$ be the set of partitions with distinct odd parts with the additional restrictions that the smallest part is $>1$, and if the smallest part of a partition $\pi$ is $2$, then $\pi$ starts either as \begin{align}
\label{dist1} \pi &= (2^{f_2},4^{f_4},6^{f_6},\dots, (4j-2)^{f_{4j-2}}, (4j-1)^{1},\dots),\\ \intertext{where $f_2,\ f_4,\ \dots,\ f_{4j-2}$ are all positive, or as}
\label{dist3} \pi &= (2^{f_2},4^{f_4},6^{f_6},\dots, (4j)^{f_{4j}}, (4j+1)^{0},(4j+2)^{0},\dots),
 \end{align} where $f_2,\ f_4,\ \dots,\ f_{4j}$ are all positive, for any positive $j$. We now claim that the middle term of \eqref{Analytic_Half_weight_2} is the generating function for the number the partitions from the set $\P_{rdo}$. We demonstrate this with the aid of \eqref{EvenODDSplit}. Using distribution on the right of \eqref{EvenODDSplit}, we get
\begin{align}
\nonumber \frac{(-q;q^2)_\infty}{(q^2;q^2)_\infty}&\left(\sum_{j\geq 0} \frac{q^{4j^2 +2j}(1-q^{4j+2})}{(-q;q^2)_{2j+1}} + \sum_{j\geq 1} \frac{q^{4j^2 + 2j-1}}{(-q;q^2)_{2j}}\right)\\\nonumber&= \sum_{j\geq 0} q^{4j^2 +2j}(-q^{4j+3};q^2)_\infty\frac{1-q^{4j+2}}{(q^2;q^2)_\infty} + \sum_{j\geq 1} q^{4j^2 + 2j-1} \frac{(-q^{4j+1};q^2)_\infty}{(q^2;q^2)_{\infty}}.\\ \intertext{Thus we have for the right-hand side of \eqref{EvenODDSplit}}
\label{even_odd_to_Prdwo} \frac{(-q^3;q^2)_\infty}{(q^4;q^2)_\infty}+\sum_{j\geq 1} q^{4j^2 + 2j-1}& \frac{(-q^{4j+1};q^2)_\infty}{(q^2;q^2)_{\infty}}+\sum_{j\geq 1} q^{4j^2 +2j}(-q^{4j+3};q^2)_\infty\frac{1-q^{4j+2}}{(q^2;q^2)_\infty}. 
\end{align} 
For positive integers $j$, we can write \begin{align*} 4j^2 + 2j-1 &= 2+4+6+\dots+(4j-2)+(4j-1),\\ 4j^2 +2j &= 2+4+\dots+4j.\end{align*} The above implies the initial conditions in \eqref{dist1} and \eqref{dist3}, respectively. The presence of the distinct odd parts and the (possibly repeated) even parts is clear from the shifted $q$-factorials. This proves,

\begin{theorem}\label{Half_Weight_Theorem_2}
\begin{equation}\label{Half_Weight_Theorem_2_eqn}
\sum_{\pi\in\G\G_2} \omega_2(\pi) q^{|\pi|} = \sum_{\pi\in\P_{rdo}} q^{|\pi|},
\end{equation}
where $\omega_2$ as in \eqref{weight_2}.
\end{theorem}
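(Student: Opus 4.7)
The plan is to combine the analytic identity \eqref{Analytic_Half_weight_2} with a direct combinatorial interpretation of both sides. For the left-hand side, I would mimic the construction behind Theorem~\ref{GG_1_combinatorial_theorem}. The summand $q^{n^2+2n}(-q;q^2)_n/(q^2;q^2)_n$ generates the partitions in $\G\G_2$ with exactly $n$ parts; the shift from $n^2$ to $n^2+2n$ is accounted for by the minimal partition $(3,5,\dots,2n+1)\in\G\G_2$. Multiplying by the additional $1/(q^2;q^2)_n$ allows one to insert non-essential 2-modular columns of height at most $n$, and the column-insertion argument from Section~\ref{GollnitzWeights} produces each $\pi\in\G\G_2$ with multiplicity $\omega_2(\pi)$. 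Here the convention $\lambda_{\nu(\pi)+1}:=0$ is what yields a uniform factor for every consecutive pair, including between the smallest part and the ``virtual'' zero below it; one checks directly that the small-part factor $(\lambda_{\nu(\pi)}-\delta_{\lambda_{\nu(\pi)},e}-1)/2$ correctly counts the non-essential columns that may be placed below or to the right of the smallest row.

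For the right-hand side, I would interpret the middle expression in \eqref{Analytic_Half_weight_2} as the generating function of $\P_{rdo}$. Since the outer factor $(-q;q^2)_\infty/(q^2;q^2)_\infty$ already enumerates partitions with distinct odd parts and unrestricted even parts, the problem reduces to reading off the alternating sum $\sum_{n\geq 0}(-1)^n q^{n^2+n}/(-q;q^2)_{n+1}$ as an ``initial-segment'' correction. The strategy is to split this sum according to the parity of $n$ and absorb the pieces into the outer Pochhammer factors. This yields the decomposition \eqref{even_odd_to_Prdwo}, consisting of a base term plus two infinite families indexed by $j\geq 1$.

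Each of the three pieces is then recognized as the generating function for a complementary subclass of $\P_{rdo}$. The base term $(-q^3;q^2)_\infty/(q^4;q^2)_\infty$ produces partitions whose smallest part is $\geq 3$, so $2$ is absent entirely. The family $q^{4j^2+2j-1}(-q^{4j+1};q^2)_\infty/(q^2;q^2)_\infty$ produces partitions of the shape \eqref{dist1}, since $4j^2+2j-1=2+4+\cdots+(4j-2)+(4j-1)$ and the remaining $q$-factors contribute arbitrary even parts together with distinct odd parts $\geq 4j+1$. The family $q^{4j^2+2j}(-q^{4j+3};q^2)_\infty(1-q^{4j+2})/(q^2;q^2)_\infty$ produces partitions of the shape \eqref{dist3}, since $4j^2+2j=2+4+\cdots+4j$ while the factor $1-q^{4j+2}$ forbids the part $4j+2$, forcing the ``gap'' above $4j$ and before the next odd part $\geq 4j+3$.

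The main obstacle, I expect, is verifying the cancellations in the parity split. Concretely one must check the identity \eqref{EQN2} (equivalently \eqref{EQN1}), which after simplifying the fraction on its left-hand side and shifting $j\mapsto j+1$ reduces to a routine match of summands. Once this is established, the identification of each of the three surviving terms with the three complementary subclasses of $\P_{rdo}$ is immediate from the standard $q$-Pochhammer encoding of distinct odd parts and of unrestricted even parts with prescribed minimum or forbidden values, completing the proof.
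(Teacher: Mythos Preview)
Your proposal is correct and follows essentially the same route as the paper: the left-hand side is obtained by repeating the $\G\G_1$ column-insertion argument with the base shifted to $n^2+2n$ and the convention $\lambda_{\nu(\pi)+1}=0$, and the right-hand side is handled by the parity split leading to \eqref{EQN1}--\eqref{EQN2} and the decomposition \eqref{even_odd_to_Prdwo}, whose three pieces are then read off as the three subclasses of $\P_{rdo}$. You have also correctly isolated the one nontrivial verification, namely \eqref{EQN2}, and its resolution via simplification and the shift $j\mapsto j+1$.
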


The second equality in \eqref{Analytic_Half_weight_2} connects an order 3 false theta function with the combinatorial objects we have interpreted above \eqref{Half_Weight_Theorem_2_eqn}. This false theta function can also be interpreted as a generating function for the number of partitions on a set after some modification. It is easy to see that
\begin{equation}\label{False_Theta_Half_weight}\frac{(-q;q^2)_\infty}{(q^2;q^2)_\infty}\sum_{j\geq 0} q^{3j^2+2j}(1-q^{2j+1}) = \sum_{j\geq 0} q^{3j^2+2j}\frac{(-q;q^2)_{j}}{(q^2;q^2)_{2j}}\frac{(-q^{2j+3};q^2)_\infty}{(q^{4j+4};q^2)_\infty}.\end{equation}
Let $\mathcal{A}$ denote the set of partitions where for any partition \begin{enumerate}[i.]\item the first integer that is not a part is odd, \item the double of the first missing part is also missing, \item each even part less than the first missing part appears at least twice, \item each odd part less than the first missing part appears at most twice,  \item each odd larger than the first missing part is not repeated.\end{enumerate} The expression \eqref{False_Theta_Half_weight} can be interpreted ---as G. E. Andrews did \cite{PrivateConv}--- as the generating function for the number of partitions from the set $\mathcal{A}$. This interpretation can be seen after the clarification that $3j^2+2j = 1+ 2+ 2+ 3+ 4+4+\dots +(2j-1)+2j+2j$. 

The set used in this interpretation does not consist of distinct odd parts necessarily, and therefore gets out of the scope of the identity of Theorem~\ref{Half_Weight_Theorem_2}. Nevertheless, this observation finalizes the discussion of the combinatorial version of \eqref{Analytic_Half_weight_2}:
\begin{theorem}\label{Half_Weight_Theorem_Most_General}
\begin{equation*}
\sum_{\pi\in\G\G_2} \omega_2(\pi) q^{|\pi|} = \sum_{\pi\in\P_{rdo}} q^{|\pi|} = \sum_{\pi\in\mathcal{A}} q^{|\pi|}.
\end{equation*}where $\omega_2$ as in \eqref{weight_2}.
\end{theorem}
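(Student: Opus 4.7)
The first equality is exactly Theorem~\ref{Half_Weight_Theorem_2}, so the entire task reduces to proving the second equality
\[
\sum_{\pi\in\P_{rdo}} q^{|\pi|} \;=\; \sum_{\pi\in\mathcal{A}} q^{|\pi|}.
\]
My plan is to show that both series equal the false theta generating function
\[
F(q) := \frac{(-q;q^2)_\infty}{(q^2;q^2)_\infty}\sum_{j\geq 0} q^{3j^2+2j}(1-q^{2j+1}),
\]
which by the second equality of \eqref{Analytic_Half_weight_2} is the same analytic object already identified with $\sum_{\pi\in\P_{rdo}}q^{|\pi|}$ in the discussion preceding Theorem~\ref{Half_Weight_Theorem_2}. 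Thus I only need to justify that the right-hand side of \eqref{False_Theta_Half_weight} is the generating function for $\mathcal{A}$.

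First I would verify the purely algebraic identity \eqref{False_Theta_Half_weight}. Splitting the infinite products at level $j$,
\[
(-q;q^2)_\infty = (-q;q^2)_j\,(1+q^{2j+1})\,(-q^{2j+3};q^2)_\infty,\qquad (q^2;q^2)_\infty = (q^2;q^2)_{2j}\,(1-q^{4j+2})\,(q^{4j+4};q^2)_\infty,
\]
and multiplying the first relation by $1-q^{2j+1}$, the factor $(1+q^{2j+1})(1-q^{2j+1})=1-q^{4j+2}$ cancels the corresponding factor in the denominator, producing exactly the Pochhammer ratio in the summand on the right of \eqref{False_Theta_Half_weight}.

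Next I would interpret, term-by-term in $j$, the right-hand side of \eqref{False_Theta_Half_weight} as the generating function for those $\pi\in\mathcal{A}$ whose first missing part is $2j+1$. The exponent decomposes as
\[
3j^2+2j = \underbrace{1+3+\cdots+(2j-1)}_{j\text{ odd parts}} + \underbrace{2\bigl(2+4+\cdots+2j\bigr)}_{\text{each even part twice}},
\]
so $q^{3j^2+2j}$ encodes a fixed \emph{base partition} in which every odd integer in $\{1,3,\dots,2j-1\}$ appears exactly once and every even integer in $\{2,4,\dots,2j\}$ appears exactly twice (the empty base for $j=0$). The four remaining factors then deform this base:
$(-q;q^2)_j=\prod_{k=0}^{j-1}(1+q^{2k+1})$ optionally doubles each odd part below $2j+1$, enforcing condition (iv);
$1/(q^2;q^2)_{2j}$ adjoins an arbitrary partition into parts from $\{2,4,\dots,4j\}$, so even parts below $2j+1$ keep their multiplicity~$\geq 2$ (condition (iii)) and even parts strictly between $2j+1$ and $4j+2$ are unconstrained;
$(-q^{2j+3};q^2)_\infty$ adjoins distinct odd parts $\geq 2j+3$ (condition (v), together with the absence of $2j+1$);
and $1/(q^{4j+4};q^2)_\infty$ adjoins unrestricted even parts $\geq 4j+4$, forbidding $4j+2=2(2j+1)$ in accordance with condition (ii). Condition (i) is automatic because $2j+1$ never appears. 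Summing over $j$ exhausts $\mathcal{A}$ since the first missing part of any $\pi\in\mathcal{A}$ is a well-defined odd integer by (i).

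The main obstacle in writing this up is bookkeeping: one must argue that the decomposition of an arbitrary $\pi\in\mathcal{A}$ into base, odd doublings, below-threshold even additions, above-threshold distinct odd parts, and above-threshold even additions is \emph{unique}, so that the map from monomials in the expanded summand to elements of $\mathcal{A}$ is a bijection. This amounts to checking that for a fixed first-missing-part value $2j+1$ one can recover the base by stripping one copy of each odd $\leq 2j-1$ and two copies of each even $\leq 2j$, and that the remaining parts split unambiguously according to whether they lie below or above $2j+1$ and according to their parity; both checks are straightforward from the explicit form of the base and of the four Pochhammer factors above.
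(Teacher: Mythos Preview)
Your proposal is correct and follows essentially the same route as the paper: reduce to Theorem~\ref{Half_Weight_Theorem_2} for the first equality, then use the rewriting \eqref{False_Theta_Half_weight} together with the decomposition $3j^2+2j = 1+2+2+3+4+4+\cdots+(2j-1)+2j+2j$ to read off the generating function for $\mathcal{A}$ term-by-term in $j$. You have simply made explicit the algebraic cancellation behind \eqref{False_Theta_Half_weight} and the uniqueness of the decomposition that the paper leaves implicit.
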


We demonstrate Theorem~\ref{Half_Weight_Theorem_Most_General} in Table~\ref{Table_Example_Half_weight_2}.

\begin{table}[htb]
\caption{Example of Theorem~\ref{Half_Weight_Theorem_Most_General} with $|\pi|=12$.} \label{Table_Example_Half_weight_2}
\begin{center}
$\begin{array}{cc||c||c}
\pi\in\G\G_2& \omega_2(\pi)&{\pi\in\P_{rdo}}&\pi\in\mathcal{A}\\
[-2ex]& & &\\
(12) & 5 & (12) & (12)\\
(9,3) & 3 &(9,3) & (9,3)\\
(8,4) & 1 &(8,4) & (8,4)\\
(7,5)& 2 & (7,5) & (7,5)\\
& & (7,3,2) & (7,2,2,1)\\
& & (6,6) & (6,6)\\
& & (5,4,3) & (5,4,3)\\
& & (5,3,2,2) & (5,2,2,2,1)\\
& & (4,4,4) & (4,4,4)\\
& & (4,4,2,2) & (4,2,2,2,1,1)\\
& & (4,2,2,2,2) & (2,2,2,2,2,1,1)\\
\end{array}$

\vspace{2mm}
The summation of all $\omega_{2}(\pi)$ values for $\pi\in \G\G_2$ with $|\pi|=12$ equals $11$ as the number of partitions from $\P_{rdo}$ and $\mathcal{A}$ with the same norm.
\end{center}
\end{table}

Recall that $\R\R$ is the set of partitions into distinct parts with difference between parts $\geq 2$. We also note that, similar to \eqref{RR_to_D}, the choice of the set $\G\G_2$ in Theorem~\ref{Half_Weight_Theorem_Most_General} can be replaced with a superset such as $\G\G_1$ or $\R\R$. The weight $\omega_2(\pi)$ would vanish for a partition $\pi\in\R\R \setminus \G\G_2$. In particular, we have \[\sum_{\pi\in\G\G_2} \omega_2(\pi) q^{|\pi|} = \sum_{\pi\in \R\R} \omega_2(\pi) q^{|\pi|}.\]

\section{Weighted partition identities relating partitions into distinct parts and unrestricted partitions}\label{Overpartitions_Section}

We start with two identities that will yield weighted partition identities between the sets $\mathcal{D}$, partitions into distinct parts, and $\mathcal{U}$, the set of all partitions.

\begin{theorem} \label{Analytic_Double_weight_THM}
\begin{align}\label{Analytic_Double_weight_1}\sum_{n \geq 0} \frac{q^{{(n^2+n)}/{2}}(-1;q)_n}{(q;q)^2_n} &= \frac{(-q;q)_\infty}{(q;q)_\infty},\\
\label{Analytic_Double_weight_2}\sum_{n \geq 0} \frac{q^{(n^2+n)/{2}}(-q;q)_n}{(q;q)^2_n} &= \frac{(-q;q)_\infty}{(q;q)_\infty} \left(1+\sum_{n\geq 1} \frac{(-1)^n q^{2n-1}}{(-q;q^2)_{n}}\right)\\ \label{Analytic_Double_weight_False_Theta}&=\frac{(-q;q)_\infty}{(q;q)_\infty}\sum_{j\geq 0} q^{{(3j^2+j)}/{2}}(1-q^{2j+1}).
\end{align}
\end{theorem}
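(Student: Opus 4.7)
The strategy is to handle the three equalities in succession, paralleling the proof of Theorem~\ref{Analytic_Half_weight_THM}.

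For \eqref{Analytic_Double_weight_1}, I would rewrite the series on the left as a limiting case of a ${}_2\phi_1$. Using the standard limit $\lim_{\rho\to\infty}(\rho;q)_n(-q/\rho)^n = q^{n(n+1)/2}$, the left-hand side becomes
$$\lim_{\rho\to\infty}{}_2\phi_1\left(\genfrac{}{}{0pt}{}{-1,\ \rho}{q};q,-\frac{q}{\rho}\right).$$
With $a=-1$, $b=\rho$, $c=q$, one verifies that $c/(ab)=-q/\rho$ is exactly the argument, so the $q$-Gauss summation \eqref{qGauss} applies. The product it returns simplifies, as $\rho\to\infty$, to $(-q;q)_\infty/(q;q)_\infty$.

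For the first equality of \eqref{Analytic_Double_weight_2}, the analogous rewriting produces $\lim_{\rho\to\infty}{}_2\phi_1\left(\genfrac{}{}{0pt}{}{-q,\ \rho}{q};q,-q/\rho\right)$, but now $c/(ab)=-1/\rho$ differs from the argument, so $q$-Gauss does not apply. Instead, I would apply Heine's transformation \eqref{Heine} with $a=-q$, $b=\rho$, $c=q$, $z=-q/\rho$, which yields $c/b=q/\rho$, $bz=-q$, $abz/c=q$, and recasts the series as
$$\frac{(q/\rho;q)_\infty(-q;q)_\infty}{(q;q)_\infty(-q/\rho;q)_\infty}\,{}_2\phi_1\left(\genfrac{}{}{0pt}{}{q,\ \rho}{-q};q,q/\rho\right).$$
The prefactor tends to $(-q;q)_\infty/(q;q)_\infty$ and the inner series tends to the Rogers-type partial theta $\sum_{n\geq 0}(-1)^n q^{n(n+1)/2}/(-q;q)_n$; one then identifies this limit with the middle expression written in the theorem, either through a direct pairing argument exploiting the telescoping $q^{2n-1}/(-q;q^2)_n=1/(-q;q^2)_{n-1}-1/(-q;q^2)_n$, or by invoking the Fine identity that rewrites the Rogers sum in the stated base-$q^2$ form.

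For the false theta representation \eqref{Analytic_Double_weight_False_Theta}, I would invoke Rogers' classical false theta identity \cite[(4), p. 333]{Rogers} (equivalently, the Fine identity \cite[(26.91--97)]{FineBook}), now without the $q\mapsto -q$ substitution that was needed in the half-weight case. This directly produces the representation $\sum_{j\geq 0}q^{(3j^2+j)/2}(1-q^{2j+1})$.

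The main obstacle is the reconciliation of the Heine output with the middle form stated in \eqref{Analytic_Double_weight_2}. Heine naturally produces a sum with $(-q;q)_n$ in the denominator and the quadratic exponent $q^{n(n+1)/2}$, while the middle form instead uses $(-q;q^2)_n$ with the linear exponent $q^{2n-1}$. The two series must coincide (both evaluate to the same Rogers false theta), but writing down the rigorous term-by-term identification is the technically most delicate piece and requires either the telescoping argument sketched above or an appeal to a specific identity from Fine's compilation.
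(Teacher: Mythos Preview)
Your approach is essentially the same as the paper's: $q$-Gauss for \eqref{Analytic_Double_weight_1}, Heine for \eqref{Analytic_Double_weight_2}, then Fine's compilation for the false theta and the middle form. Two small differences are worth flagging.

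First, the paper introduces a second auxiliary parameter $b$ and writes the left side of \eqref{Analytic_Double_weight_2} as
\[
\lim_{b\to -1}\lim_{\rho\to\infty}{}_2\phi_1\!\left(\genfrac{}{}{0pt}{}{\rho,\ qb}{qb^2};q,\tfrac{qb}{\rho}\right),
\]
applies Heine with $a=\rho$, takes $\rho\to\infty$, and packages the result as $(1-b)\,F(b,0;b)$ in Fine's notation. Three separate identities from Fine, namely (6.1), (7.7), and (23.2), then evaluate $\lim_{b\to -1}(1-b)F(b,0;b)$ in the three forms needed. Your single-limit route (no $b$) is perfectly valid and lands directly on the series $\sum_{n\geq 0}(-1)^n q^{n(n+1)/2}/(-q;q)_n$, which is exactly Fine~(6.1) already specialized. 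So you end up needing the \emph{same} Fine identities to pass to the other two forms; the extra parameter $b$ only serves to make the citation structure transparent.

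Second, your suggested telescoping $q^{2n-1}/(-q;q^2)_n=1/(-q;q^2)_{n-1}-1/(-q;q^2)_n$ is correct, but because of the alternating sign it does not collapse the middle sum into the Rogers sum $\sum_n (-1)^n q^{n(n+1)/2}/(-q;q)_n$; it only rewrites it as $2\sum_{n\geq 1}(-1)^{n-1}/(-q;q^2)_n$. You will still need a Fine-type identity here. Also, the reference you give (Fine~(26.91--97), Rogers p.~333) is the one used in Theorem~\ref{Analytic_Half_weight_THM} for the base-$q^2$ series; for the present theorem the paper instead invokes Fine~(7.7) and (23.2).
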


\begin{proof}We note that the left-hand sides of \eqref{Analytic_Double_weight_1} and \eqref{Analytic_Double_weight_2} are \[\lim_{\rho\rightarrow\infty}{}_2\phi_1 \left(\genfrac{}{}{0pt}{}{-1,\ \rho}{q};q,-\frac{q}{\rho} \right)\text{  and  }\lim_{b\rightarrow -1}\lim_{\rho\rightarrow\infty}{}_2\phi_1 \left(\genfrac{}{}{0pt}{}{\rho,\ qb }{qb^2};q,\frac{qb}{\rho} \right),\text{ respectively.}\] Similar to the case of Theorem~\ref{Analytic_Half_weight_THM}, equation \eqref{Analytic_Double_weight_1} is a special case of the $q$-Gauss identity \eqref{qGauss}. This identity has also been previously proven in the work of Starcher \cite[(3.7), p. 805]{Starcher}.

Identity \eqref{Analytic_Double_weight_2} is more involved. To establish the equality of \eqref{Analytic_Double_weight_2}, we apply the Heine transformation \eqref{Heine} with $a=\rho$ which yields
\begin{equation}\label{OneStepAwayFromF}
\sum_{n \geq 0} \frac{q^{(n^2+n)/{2}}(-q;q)_n}{(q;q)^2_n} = \lim_{b\rightarrow-1}\lim_{\rho\rightarrow\infty} \frac{(b;q)_\infty(q^2b^2/\rho;q)_\infty}{(qb^2;q)_\infty (qb/\rho;q)_\infty} \sum_{n\geq 0} \frac{(qb;q)_n}{(q^2b^2/\rho;q)_n} b^n.
\end{equation}
After the limit $\rho\rightarrow\infty$, the sum on the right of \eqref{OneStepAwayFromF} turns into
\begin{equation}\label{2F(-1,0,-1)}
\sum_{n \geq 0} \frac{q^{(n^2+n)/{2}}(-q;q)_n}{(q;q)^2_n} = \lim_{b\rightarrow-1}\frac{(bq;q)_\infty}{(qb^2;q)_\infty} (1-b)F(b,0;b),
\end{equation} where in Fine's notation \cite[(1.1)]{FineBook} \[F(a,b;t) := {}_2\phi_1 \left(\genfrac{}{}{0pt}{}{q,\ aq}{bq};q,t \right).\]
We have three explicit formulas for the expression $\lim_{b\rightarrow-1}(1-b)F(b,0;b)$ coming from Fine's work:
\begin{align}
\label{Fine2}\lim_{b\rightarrow-1}(1-b)F(b,0;b)&= \sum_{j\geq 0} q^{{(3j^2+j)}/{2}}(1-q^{2j+1})\\
\label{Fine1} &=1 + \sum_{n\geq 1} \frac{(-1)^n q^{2n-1}}{(-q;q^2)_n}\\
\label{Fine3} &= \sum_{n=0}^\infty \frac{(-1)^n q^{(n^2+n)/2}}{(-q;q)_n}.
\end{align}
These identities are \cite[(7.7), p. 7]{FineBook}, \cite[(23.2), p. 45]{FineBook}, and \cite[(6.1), p. 4]{FineBook} with $a=t\rightarrow-1$, respectively. Formulas \eqref{Fine2} and \eqref{Fine1} in comparison with \eqref{2F(-1,0,-1)} prove both \eqref{Analytic_Double_weight_False_Theta} and \eqref{Analytic_Double_weight_2}, respectively. 
\end{proof}

Note that the equality of the right sides of the identities \eqref{Fine2}--\eqref{Fine3} can be proved in a purely combinatorial  manner  with the aid of Sylvester's bijection \cite{BressoudBook} and Franklin's involution \cite{Theory_of_Partitions}. The equality of \eqref{Fine2} and \eqref{Fine3} will be used later in the proof of the Theorem~\ref{RestrictedTHM}.

We remark that identity \eqref{Analytic_Double_weight_1} was further studied in \cite{Overpartitions_Paper}. There the identity was combinatorially interpreted as a relation between generalized Frobenius symbols and overpartitions. 

Now we will move on to our discussion of combinatorial interpretations of the analytic identities of Theorem~\ref{Analytic_Double_weight_THM}. We have already pointed out that the product on the right side \eqref{Analytic_Double_weight_1} is a special case of Alladi's \eqref{overpartitions} with $a=1$, $b=2$ and $n\rightarrow\infty$. This can be interpreted as the weighted sum on the set of partitions $\mathcal{U}$: \begin{equation*}\tag{\ref{weights_of_overpartitions}}
\frac{(-q;q)_\infty}{(q;q)_\infty} = \sum_{\pi \in \mathcal{U}} 2^{\nu_d(\pi)}q^{|\pi|},
\end{equation*} where $\nu_d(\pi)$ is the number of different parts of $\pi$. 

The left-hand side of \eqref{Analytic_Double_weight_1} can also be interpreted as a weighted sum. In order to derive the weights involved, we dissect the summand on the left. For a positive integer $n$, we have\begin{equation}\label{dissection}
\frac{q^{{(n^2+n)}/{2}}(-1;q)_n}{(q;q)^2_n} =  \frac{q^{{(n^2+n)}/{2}}}{(q;q)_n}\frac{2}{1-q^n} \frac{(-q;q)_{n-1}}{(q;q)_{n-1}}.
\end{equation} The first expression on the right \begin{equation}\label{Ferrer_distinct}\frac{q^{{(n^2+n)}/{2}}}{(q;q)_n}\end{equation} is the generating function for the number of partitions into exactly $n$ distinct parts \cite{Theory_of_Partitions}. We will think of these partitions to have the base color of white. The rational factor 
\begin{equation}\label{Ferrer_same_columns} \frac{2}{1-q^n}= 2+ 2q^n + 2q^{2n}+2q^{3n}+\dots
\end{equation}
 is the generating function for the number of partitions into parts each of size $n$ each time counted with weight 2, regardless of occurrence. We combine Ferrers diagrams of partitions enumerated by \eqref{Ferrer_distinct} and the conjugate of partitions counted by \eqref{Ferrer_same_columns} using column insertions. This yields the generating function for the number of partitions into exactly $n$ disctinct parts, where part $\lambda_n$ is counted with weight $2\lambda_n$. 

The column insertion is similar to the case in the 2-modular Ferrers diagrams as we exemplified in Table~\ref{Table_insertion example}. We embed a colored column from a conjugate of a colored partition counted by \eqref{Ferrer_same_columns} all the way left inside a Ferrers diagram counted by \eqref{Ferrer_distinct} without violating the definition of a partition. An example of column insertion is given in Table~\ref{column_insertion}.

\begin{table}[htb]\caption{Column insertion}\label{column_insertion}
\definecolor{qqffqq}{rgb}{0.,1.,0.}
\definecolor{cqcqcq}{rgb}{0.7529411764705882,0.7529411764705882,0.7529411764705882}
\begin{tikzpicture}[line cap=round,line join=round,>=triangle 45,x=0.33cm,y=0.33cm]
\clip(-6.1,0.9) rectangle (40.1,13.1);
\fill[line width=0.pt,color=qqffqq,fill=qqffqq,fill opacity=0.35] (13.,10.) -- (17.,10.) -- (17.054927469276898,3.9609564812242373) -- (13.,4.) -- cycle;
\fill[line width=0.pt,color=qqffqq,fill=qqffqq,fill opacity=0.4] (26.,4.) -- (25.99278414969377,10.) -- (30.,10.) -- (30.019296207337426,4.) -- cycle;
\draw (-3.,2.)-- (-1.,2.);
\draw (-1.,3.)-- (-1.,2.);
\draw (-1.,3.)-- (1.,3.);
\draw (1.,3.)-- (1.,4.);
\draw (1.,4.)-- (4.,4.);
\draw (4.,4.)-- (4.,5.);
\draw (4.,5.)-- (5.,5.);
\draw (6.,7.)-- (7.,7.);
\draw (-3.,2.)-- (-3.,1.);
\draw (-3.,1.)-- (-4.,1.);
\draw (-4.,1.)-- (-4.,10.);
\draw (-4.,10.)-- (11.,10.);
\draw (11.,10.)-- (11.,9.);
\draw (11.,9.)-- (9.,9.);
\draw (9.,9.)-- (9.,8.);
\draw (9.,8.)-- (7.,8.);
\draw (7.,8.)-- (7.,7.);
\draw (13.,10.)-- (13.,4.);
\draw (13.,4.)-- (17.054927469276898,3.9609564812242373);
\draw (17.054927469276898,3.9609564812242373)-- (17.,10.);
\draw (17.,10.)-- (13.,10.);
\draw (21.,10.)-- (40.,10.);
\draw (36.,8.)-- (36.,7.);
\draw (36.,7.)-- (35.,7.);
\draw (35.,7.)-- (35.,6.);
\draw (35.,6.)-- (34.,6.);
\draw (34.,6.)-- (34.,5.);
\draw (34.,5.)-- (33.,5.);
\draw (33.,5.)-- (33.,4.);
\draw (26.,4.)-- (26.,3.);
\draw (26.,3.)-- (24.,3.);
\draw (24.,3.)-- (24.,2.);
\draw (24.,2.)-- (22.,2.);
\draw (22.,2.)-- (22.,1.);
\draw (22.,1.)-- (21.,1.);
\draw (21.,1.)-- (21.,10.);
\draw (19.5,4.5) node[anchor=center] {$ \lambda_i $};
\draw (19.5,3.5) node[anchor=center] {$ \lambda_{i+1} $};
\draw (29,12.2) node[anchor=center] {$ \lambda_i - \lambda_{i+1} -1$};
\draw (11.5,7.5) node[anchor=center] {,};
\draw (18.5,7.5) node[anchor=center] {$ \rightarrow $};
\draw (5.,5.)-- (5.,6.);
\draw (5.,6.)-- (6.,6.);
\draw (6.,6.)-- (6.,7.);
\draw (26.,4.)-- (33.,4.);
\draw (36.,8.)-- (38.,8.);
\draw (38.,8.)-- (38.,9.);
\draw (38.,9.)-- (40.,9.);
\draw (40.,9.)-- (40.,10.);
\draw (26,10.)-- (26.,4.);
\draw (30.,10.)-- (30,4.);
\draw [dash pattern=on 1pt off 1pt] (32,10.)-- (32.,4.);
\draw (32.5,3.5) node[anchor=center] {$1$};
\draw (26.,11.)-- (32.,11.);
\draw (26,10.5)-- (26,11.5);
\draw (32,10.5)-- (32,11.5);
\draw (26,10.5)-- (26,11.5);
\draw (-4.3,10)-- (-5.,10.);
\draw (-5.,1.)-- (-4.3,1);
\draw (-4.7,1)-- (-4.7,10);
\draw (-6,6.5) node[anchor=north west] {$n$};
\draw (13.3,4.2)-- (13.8,4.2);
\draw (13.2,9.8)-- (13.8,9.8);
\draw (13.5,9.8)-- (13.5,4.2);
\draw (13.5,8) node[anchor=north west] {$\leq n$};
\end{tikzpicture}
\end{table}

The expression \begin{equation*}
\frac{(-q;q)_{n-1}}{(q;q)_{n-1}}
\end{equation*} is the generating function for the number of partitions into parts $\leq n-1$, where every different sized part is counted with weight 2. After conjugating these partitions and inserting its columns to partitions into $n$ distinct parts, we see that there are $ 2(\lambda_i - \lambda_{i+1}-1)+1$ possible colorations between consecutive parts,  where at least one secondary color appears for $1\leq i\leq n-1$. To be more precise, there are $\lambda_i - \lambda_{i+1}-1$ columns coloring the space between $\lambda_{i+1}$ and $\lambda_{i}-1$ and each coloring comes with weight 2. This way we have the weight $2(\lambda_i - \lambda_{i+1}-1)+1$ where the extra 1 comes from the option of not having a colored column at all. Again these column insertions are demonstrated in Table~\ref{column_insertion}.

Hence, for a partition $\pi=(\lambda_1,\lambda_2,\dots)$, we have \begin{equation}\label{Double_weight_identity_1}
\sum_{n \geq 0} \frac{q^{(n^2+n)/{2}}(-1;q)_n}{(q;q)^2_n} = \sum_{\pi\in\mathcal{D}} \widetilde{\omega}_1 (\pi) q^{|\pi|},
\end{equation} where \begin{equation}\label{Double_weight_1}
\widetilde{\omega}_1 (\pi) := 2\lambda_{\nu(\pi)}\cdot\prod_{i=1}^{\nu(\pi)-1} (2\lambda_i-2\lambda_{i+1}-1).
\end{equation}
Similar to \eqref{weight_2}, we can change the product of $\widetilde{\omega}_1$ into a uniform product over the parts of a partition. With the custom choice that $\lambda_{\nu(\pi)+1} := -1/2$, we have \begin{equation}\label{Double_weight_1_1}
\widetilde{\omega}_1 (\pi) = \prod_{i=1}^{\nu(\pi)} (2\lambda_i-2\lambda_{i+1}-1).
\end{equation} Combining \eqref{Analytic_Double_weight_1}, \eqref{weights_of_overpartitions}, and \eqref{Double_weight_identity_1} yields
\begin{theorem}\label{AlladisDream1} \[\sum_{\pi\in\mathcal{D}} \widetilde{\omega}_1 (\pi) q^{|\pi|} = \sum_{\pi \in \mathcal{U}} {\omega_1'}(\pi)q^{|\pi|},\] where $\widetilde{\omega}_{1}(\pi)$ is as in \eqref{Double_weight_1_1} and ${\omega_1'}(\pi) = 2^{\nu_d(\pi)}$.
\end{theorem}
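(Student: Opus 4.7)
The plan is to read Theorem~\ref{AlladisDream1} as a corollary obtained by chaining three already-established facts through the common intermediary $(-q;q)_\infty/(q;q)_\infty$. Concretely, I would link \eqref{Double_weight_identity_1} on the left and \eqref{weights_of_overpartitions} on the right to the analytic identity \eqref{Analytic_Double_weight_1} of Theorem~\ref{Analytic_Double_weight_THM}.

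First I would invoke \eqref{Analytic_Double_weight_1}, which gives
\[
\sum_{n \geq 0} \frac{q^{(n^2+n)/2}(-1;q)_n}{(q;q)_n^2} \;=\; \frac{(-q;q)_\infty}{(q;q)_\infty}.
\]
The right side of this equality has already been interpreted combinatorially: it is the $a=1$, $b=2$, $n\to\infty$ specialization of Alladi's Theorem~\ref{Overpartitions_THM}, which (as recorded in \eqref{weights_of_overpartitions}) equals $\sum_{\pi\in\mathcal{U}} 2^{\nu_d(\pi)} q^{|\pi|} = \sum_{\pi\in\mathcal{U}} \omega_1'(\pi) q^{|\pi|}$.

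Second I would read off the left side of \eqref{Analytic_Double_weight_1} using the dissection \eqref{dissection} and the column-insertion argument established immediately before \eqref{Double_weight_identity_1}. Namely, for each fixed $n$, the factor $q^{(n^2+n)/2}/(q;q)_n$ generates partitions into exactly $n$ distinct parts (base color white), the factor $2/(1-q^n)$ contributes a weight of $2\lambda_{\nu(\pi)}$ attached to the smallest part, and the factor $(-q;q)_{n-1}/(q;q)_{n-1}$ contributes, after conjugation and insertion of columns of height $\leq n-1$ into the gaps of a white partition counted by \eqref{Ferrer_distinct}, a weight of $2(\lambda_i-\lambda_{i+1}-1)+1 = 2\lambda_i-2\lambda_{i+1}-1$ in each gap $1\le i\le \nu(\pi)-1$. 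Summing over $n$ then produces exactly $\sum_{\pi\in\mathcal{D}} \widetilde{\omega}_1(\pi) q^{|\pi|}$ with $\widetilde{\omega}_1$ as in \eqref{Double_weight_1_1}.

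Finally, equating the two combinatorial expressions for $(-q;q)_\infty/(q;q)_\infty$ yields the identity of Theorem~\ref{AlladisDream1}. Since the heavy lifting has already occurred in two places, namely the analytic evaluation via $q$-Gauss in the proof of Theorem~\ref{Analytic_Double_weight_THM} and the combinatorial bookkeeping of the column insertion in the passage culminating in \eqref{Double_weight_identity_1}, the only step that requires any genuine care is verifying that the ``$+1$'' in the gap weight $2(\lambda_i-\lambda_{i+1}-1)+1$ really accounts for the option of inserting no colored column between $\lambda_{i+1}$ and $\lambda_i$, and that the boundary factor $2\lambda_{\nu(\pi)}$ correctly absorbs the $n=0$ case (the empty partition contributes $1$ to both sides). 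This is the only subtle point, and it is handled by the customary convention $\lambda_{\nu(\pi)+1} = -1/2$ used in \eqref{Double_weight_1_1}, so the proof reduces to a one-line assembly.
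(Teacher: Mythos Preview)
Your proposal is correct and follows essentially the same route as the paper: the paper likewise obtains Theorem~\ref{AlladisDream1} as a one-line assembly, explicitly writing ``Combining \eqref{Analytic_Double_weight_1}, \eqref{weights_of_overpartitions}, and \eqref{Double_weight_identity_1} yields'' the result. The combinatorial bookkeeping you recap (the dissection \eqref{dissection}, the column insertion, and the gap-weight count $2(\lambda_i-\lambda_{i+1}-1)+1$) is precisely the argument the paper develops in the passage preceding the theorem.
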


This is the first example of a weighted partition identity connecting $\mathcal{D}$ and $\mathcal{U}$ with strictly positive weights. The combinatorial interpretation of \eqref{Analytic_Double_weight_2} is going to provide a second example of a connection between $\mathcal{D}$ and $\mathcal{U}$ making use of a new partition statistic.

The left side of \eqref{Analytic_Double_weight_2} can be interpreted similar to \eqref{Analytic_Double_weight_1}. The weights associated with this case differ from the weight $\widetilde{\omega}_1$ only at the last part. For a partition $\pi=(\lambda_1,\lambda_2,\dots)$ with the custon definition that $\lambda_{\nu(\pi)+1}:=0$ we define the new weight uniformly as in \eqref{Double_weight_1_1}, \begin{equation}\label{the_big_weight_1}\widetilde{\omega}_2(\pi) =\prod_{i=1}^{\nu(\pi)} (2\lambda_i-2\lambda_{i+1}-1).\end{equation}
With this definition, we have the identity similar to \eqref{Double_weight_identity_1}, \begin{equation}\label{Double_weight_identity_2}\sum_{n \geq 0} \frac{q^{(n^2+n)/{2}}(-q;q)_n}{(q;q)^2_n} = \sum_{\pi\in\mathcal{D}} \widetilde{\omega}_2 (\pi) q^{|\pi|}.\end{equation}

In order to get the weights for the right side of \eqref{Analytic_Double_weight_2}, we modify that expression. We rewrite $(-q;q^2)_\infty$, the generating function for number of partitions into distinct odd parts, as
\begin{equation}\label{smallest_part}
(-q;q^2)_\infty = 1 + \sum_{n\geq 1} q^{2n-1} (-q^{2n+1};q^2)_\infty.
\end{equation} Note that the summands in \eqref{smallest_part} are generating functions for the number of partitions into distinct odd parts with the smallest part being equal to $2n-1$.

The right-hand side expression of the identity \eqref{Analytic_Double_weight_2} directly yields
\begin{align}
\label{initial_step_to_weights} \frac{(-q;q)_\infty}{(q;q)_\infty} \left(1+\sum_{n\geq 1} \frac{(-1)^n q^{2n-1}}{(-q;q^2)_{n}}\right) &= \frac{(-q^2;q^2)_\infty}{(q;q)_\infty} \left((-q;q^2)_\infty+\sum_{n\geq 1}(-1)^n q^{2n-1}(-q^{2n+1};q^2)_\infty\right).\\ 
\intertext{Employing \eqref{smallest_part}, combining sums and changing the summation indices $n\mapsto n+1$ on the right side of  \eqref{initial_step_to_weights}, we get}
\nonumber&\hspace{-2cm}=\frac{(-q^2;q^2)_\infty}{(q;q)_\infty}\left(1+ 2\sum_{n\geq 0} q^{4n+3} (-q^{4n+5};q^2)_\infty \right)\\
\label{Double_weight_core}&\hspace{-2cm}=\frac{(-q^2;q^2)_\infty}{(q^2;q^2)_\infty}\left(\frac{1}{(q;q^2)_\infty}+ \sum_{n\geq 0} \frac{1}{(q;q^2)_{2n+1}} \frac{2q^{4n+3}}{1-q^{4n+3}} \frac{(-q^{4j+5};q^2)_\infty}{(q^{4j+5};q^2)_\infty} \right).
\end{align}

We can interpret \eqref{Double_weight_core} as a combinatorial weighted identity over the set of unrestricted partitions, $\mathcal{U}$. Let $\pi=(\lambda_1,\lambda_2,\dots)$ be a partition. Let $\nu_{de}(\pi)$ be the number of different even parts. Let $\mu_{n,o}(\pi)$ denote the new partition statistic, defined as the number of different odd parts (without counting repetitions) $\geq n$ of $\pi$, for some integer $n$. Let 
\begin{equation}\label{truth_function}\chi(\textit{statement})=\left\{\begin{array}{cc}
1, &\text{if the \textit{statement} is true},\\
0, &\text{otherwise},
\end{array}\right. \end{equation}
be the \textit{truth} function. We define \begin{equation}\label{the_big_weight}
{\omega_2'}(\pi) = 2^{\nu_{de}(\pi)} \left( 1 + \sum_{i\geq 0} \chi((4i+3)\in\pi)2^{\mu_{4i+3,o}(\pi)} \right).
\end{equation} With these definitions and keeping \eqref{Double_weight_core} in mind, we have the weighted identity
\begin{equation}\label{the_big_weight_identity}
\frac{(-q;q)_\infty}{(q;q)_\infty} \left(1+\sum_{n\geq 1} \frac{(-1)^n q^{2n-1}}{(-q;q^2)_{n}}\right) = \sum_{\pi\in \mathcal{U}} {\omega_2'}(\pi) q^{|\pi|}.
\end{equation}

The emergence of this weight can be explained in two parts. The front factor of \eqref{Double_weight_core} (identity \eqref{weights_of_overpartitions} with $q\mapsto q^2$) yields the weight $2^{\nu_{de}(\pi)}$. This is easy to see as in the combined partition all of the parts coming from \eqref{weights_of_overpartitions} with $q\mapsto q^2$ can be thought of as even parts. The summation part of the weight \eqref{the_big_weight} comes from the respective summation in \eqref{Double_weight_core}\[\frac{1}{(q;q^2)_\infty}+ \sum_{n\geq 0} \frac{1}{(q;q^2)_{2n+1}} \frac{2q^{4n+3}}{1-q^{4n+3}} \frac{(-q^{4j+5};q^2)_\infty}{(q^{4j+5};q^2)_\infty}.\] The first term is the generating function for the number of partitions into odd parts where we count every partition once. The right summation is the weighted count of partitions into odd parts. For a non-negative integer $n$ the summand \[\frac{1}{(q;q^2)_{2n+1}} \frac{2q^{4n+3}}{1-q^{4n+3}} \frac{(-q^{4j+5};q^2)_\infty}{(q^{4j+5};q^2)_\infty}\] is the generating function for the number of partitions, where $4n+3$ appears as a part, every odd part less than $4n+3$ is counted once, and every different odd part $\geq 4n+3$ is counted with the weight 2. This yields the weight $2^{\mu_{4n+3,o}(\pi)}$ for a partition $\pi$.

Above observations \eqref{Double_weight_identity_2} and \eqref{the_big_weight_identity} combined with \eqref{Analytic_Double_weight_2} provide another new example of a relation between partitions into distinct parts and partitions into unrestricted parts with non-vanishing weights.

\begin{theorem}\label{AlladisDream2}\[\sum_{\pi\in\mathcal{D}} \widetilde{\omega}_2 (\pi) q^{|\pi|} = \sum_{\pi \in \mathcal{U}} {\omega_2'}(\pi)q^{|\pi|},\] where $\widetilde{\omega}_{2}(\pi)$ is as in \eqref{the_big_weight_1} and ${\omega_2'}(\pi)$ as in \eqref{the_big_weight}.
\end{theorem}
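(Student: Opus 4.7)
The plan is to assemble Theorem~\ref{AlladisDream2} from three already-prepared pieces: the analytic identity \eqref{Analytic_Double_weight_2}, the combinatorial interpretation \eqref{Double_weight_identity_2} of its left-hand side, and the combinatorial interpretation \eqref{the_big_weight_identity} of its right-hand side. Since the chain of equalities is
\[
\sum_{\pi\in\mathcal{D}} \widetilde{\omega}_2(\pi)\,q^{|\pi|}
\;=\;\sum_{n\geq 0}\frac{q^{(n^2+n)/2}(-q;q)_n}{(q;q)_n^2}
\;=\;\frac{(-q;q)_\infty}{(q;q)_\infty}\left(1+\sum_{n\geq 1}\frac{(-1)^n q^{2n-1}}{(-q;q^2)_n}\right)
\;=\;\sum_{\pi\in\mathcal{U}}\omega_2'(\pi)\,q^{|\pi|},
\]
the real content is in verifying the first and third equalities; the middle one is Theorem~\ref{Analytic_Double_weight_THM}.

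For the first equality I would argue exactly as the preceding discussion of $\widetilde{\omega}_1$, only tracking the difference at the smallest part. Dissect the summand on the left as
\[
\frac{q^{(n^2+n)/2}(-q;q)_n}{(q;q)_n^2} \;=\; \frac{q^{(n^2+n)/2}}{(q;q)_n}\cdot\frac{(-q;q)_{n-1}}{(q;q)_{n-1}}\cdot\frac{1+q^n}{1-q^n},
\]
and interpret the three factors as: partitions into exactly $n$ distinct parts (the base white diagram), a two-colored reservoir of columns of heights $\leq n-1$ producing the factor $2(\lambda_i-\lambda_{i+1})-1$ between consecutive parts for $1\leq i<n$, and a two-colored column of height exactly $n$ producing the factor $2\lambda_n-1$ at the tail (the key change from $\widetilde{\omega}_1$ is that here the final geometric series starts at $q^n$ rather than $1+q^n$, matching $\lambda_{\nu(\pi)+1}:=0$). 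Performing the column insertions as in Table~\ref{column_insertion} shows the generating function equals $\sum_{\pi\in\mathcal{D}}\widetilde{\omega}_2(\pi)q^{|\pi|}$ with $\widetilde{\omega}_2$ from \eqref{the_big_weight_1}.

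For the third equality I would follow the manipulation \eqref{initial_step_to_weights}--\eqref{Double_weight_core}. Pull out $(-q^2;q^2)_\infty/(q;q)_\infty$, apply \eqref{smallest_part} to rewrite $(-q;q^2)_\infty$ by smallest part, combine the resulting sum with the alternating sum from \eqref{Analytic_Double_weight_2}, and observe that the alternating cancellation collapses the residual terms to $2\sum_{n\geq 0}q^{4n+3}(-q^{4n+5};q^2)_\infty$. Splitting $(-q^2;q^2)_\infty/(q;q)_\infty = (-q^2;q^2)_\infty/(q^2;q^2)_\infty \cdot 1/(q;q^2)_\infty$ and routing the latter factor into each term yields the form \eqref{Double_weight_core}. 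Reading this expression termwise as a generating function over $\mathcal{U}$ then gives $\omega_2'$: the prefactor $(-q^2;q^2)_\infty/(q^2;q^2)_\infty$ contributes $2^{\nu_{de}(\pi)}$ by \eqref{weights_of_overpartitions} applied to even parts, while the ``$1$'' and the sum over $n$ in the parenthesis produce the ``$1+\sum_{i\geq 0}\chi((4i+3)\in\pi)\,2^{\mu_{4i+3,o}(\pi)}$'' factor by interpreting $2q^{4n+3}/(1-q^{4n+3})$ as forcing $4n+3$ to appear, $1/(q;q^2)_{2n+1}$ as counting odd parts below $4n+3$ once, and $(-q^{4n+5};q^2)_\infty/(q^{4n+5};q^2)_\infty$ as counting distinct odd parts above $4n+3$ with weight $2$.

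The main obstacle is the third step, specifically justifying the ``collapse'' of the alternating sum into $2\sum_{n\geq 0}q^{4n+3}(-q^{4n+5};q^2)_\infty$ and then matching the rearranged generating function termwise with $\omega_2'$. The first and second equalities are essentially bookkeeping once the column-insertion template from the $\widetilde{\omega}_1$ case is in hand, but the third requires both the analytic manipulation and a careful accounting to confirm that every partition of $\mathcal{U}$ is counted with the exact weight \eqref{the_big_weight} and that no partitions are double-counted when the ``$1$'' term and the ``$4n+3$-indexed'' terms are summed. Once these are checked, chaining the three equalities yields the theorem.
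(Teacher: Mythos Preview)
Your proposal is correct and follows the paper's own approach: chain \eqref{Double_weight_identity_2}, \eqref{Analytic_Double_weight_2}, and \eqref{the_big_weight_identity}, with the combinatorics of the first equality obtained by the same column-insertion template as for $\widetilde{\omega}_1$ and the third equality obtained via the manipulations \eqref{initial_step_to_weights}--\eqref{Double_weight_core}. One small wording slip: the parenthetical about the tail factor is muddled---the change from $\widetilde{\omega}_1$ to $\widetilde{\omega}_2$ is that $\dfrac{2}{1-q^n}=2+2q^n+2q^{2n}+\cdots$ is replaced by $\dfrac{1+q^n}{1-q^n}=1+2q^n+2q^{2n}+\cdots$, which indeed produces the weight $2\lambda_{\nu(\pi)}-1$ (consistent with $\lambda_{\nu(\pi)+1}:=0$) in place of $2\lambda_{\nu(\pi)}$; just state it that way.
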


We would like to exemplify Theorem~\ref{AlladisDream2} in Table~\ref{Alladi_Big_Dream_Table}.

\begin{table}[htb]\caption{Example of Theorem~\ref{AlladisDream2} with $|\pi|=10$.}\label{Alladi_Big_Dream_Table}
\begin{center}\vspace{-.5cm}

\[\begin{array}{cc|cc|cc||cc}
\pi\in\mathcal{U}  & \omega_2' &  \pi\in\mathcal{U}& \omega_2' &  \pi\in\mathcal{U}& \omega_2'& \pi\in\mathcal{D} & \widetilde{\omega}_2\\[-2ex]& & & & & & &\\
(10) 		 & 2		&   (5,3,2) 		& 10  	&(3,3,3,1)		&   3& (10) & 19\\
(9,1)		 & 1		&   (5,3,1,1) 		&  5  	&(3,3,2,2)		&   6& (9,1) & 15\\
(8,2)		& 4			&   (5,2,2,1) 		&  2  	&	(3,3,2,1,1)		&   6& (8,2) & 33\\
(8,1,1) 	 & 2		&   (5,2,1,1,1)	 	&  2 	&	(3,3,1,1,1,1)	&   3	& (7,3) & 35\\
(7,3) 		 & 7		&   (5,1,1,1,1,1)  	& 1		&	(3,2,2,2,1)		&   6	& (6,4) & 21\\
(7,2,1) 	&  6		&   (4,4,2)  		& 	4	&	(3,2,2,1,1,1)	&   6	 & (6,3,1) & 15\\
(7,1,1,1) 	&  3		&   (4,4,1,1)  		&  2	&	(3,2,1,1,1,1,1)	&   6& (5,4,1) & 5\\
(6,4) 		 & 4		&    (4,3,3) 		&   6 	&(3,1,1,1,1,1,1,1)&  3	& (5,3,2) & 9\\
(6,3,1) 	 & 6		&   (4,3,2,1)		& 	12 	&   (2,2,2,2,2)		&   2	& (4,3,2,1) & 1\\
(6,2,2) 	&  4		&   (4,3,1,1,1)		&   6	&   (2,2,2,2,1,1)	&   2	& &\\
(6,2,1,1) 	&  4		&   (4,2,2,2)		&   4	&   (2,2,2,1,1,1,1)	&   2	& &\\
(6,1,1,1,1) & 	2		&   (4,2,2,1,1)		&   4	& 	(2,2,1,1,1,1,1,1)&   2	& &\\
(5,5) 		&  1		&   (4,2,1,1,1,1)	&   4  	&	(2,1,1,1,1,1,1,1,1)&  2	& &\\
(5,4,1) 	& 2			&   (4,1,1,1,1,1,1)	&   2	&   (1,1,1,1,1,1,1,1,1,1)&  1	& &\\  		
\end{array}\]
The summation of all $\omega_2'(\pi)$, or all $\widetilde{\omega}_{2}(\pi)$ for $|\pi|=10$ are the same and the sum equals 162.
\end{center}
\end{table}

In literature, there are many examples of partition identities with multiplicative weights. This is no different from the previous parts of this paper, such as Theorem~\ref{Alladi_weighted_sum},  \ref{Combinatorial_GG_identities_THM}, \ref{GG_1_combinatorial_theorem}, \ref{Half_Weight_Theorem_Most_General}. and \ref{AlladisDream1}. Theorem~\ref{AlladisDream2} is interesting not only because it gives a weighted connection between the sets $\mathcal{D}$ and $\mathcal{U}$, but also because of the appearance of the unusual additive weights.

The expression \eqref{Analytic_Double_weight_False_Theta}, which involves an order 3/2 false theta function, can be interpreted as a generating function for a weighted count of the ordinary partitions. The interpretation of the similar expression \eqref{Analytic_Half_weight_2}, which has an order 3 false theta function, required us to depart from the set of partitions with distinct odd parts $\P_{do}$ to an unexpected set $\mathcal{A}$ (with partitions not necessarily having distinct odd parts) with trivial weight 1 for each partition. Now we have a different situation. We stay with the set of all partitions $\mathcal{U}$, but the weights become non-trivial and, occasionally, zero. 


Recall that in frequency notation, a partition $\pi = (1^{f_1},2^{f_2},\dots)$, where $f_i(\pi)=f_i$ is the number of occurrences of $i$ in $\pi$. Let \begin{align}\label{last_weight}\omega_2^*(\pi) &= (1 - \chi(f_1(\pi)\geq 2))\prod_{n\geq 2} 2^{\chi(f_n(\pi)\geq 1)} +\\\nonumber&\hspace{-1cm}\sum_{j\geq 1}\left( \chi(f_{2j+1}(\pi)\leq 1) \chi(f_j(\pi)\geq 2) 2^{ \chi(f_j(\pi)\geq 3)}\prod_{i=1}^{j-1} \chi(f_i(\pi)\geq 3)  2^{\chi(f_i(\pi)\geq 4 )}\prod_{\substack{n>j,\\ n\not= 2j+1}} 2^{\chi(f_n(\pi)\geq 1)}\right), \end{align} where $\chi$ is defined in \eqref{truth_function}. We remark that the sum in $\omega_2^*(\pi)$ is finite as partitions are finite, and so $\chi(f_i(\pi)\geq 3)$ vanishes for any value of $i$ greater than the largest part of $\pi$. Then we have \begin{equation}
\frac{(-q;q)_\infty}{(q;q)_\infty}\sum_{j\geq 0} q^{{(3j^2+j)}/{2}}(1-q^{2j+1}) = \sum_{\pi\in\U} \omega_2^*(\pi) q^{|\pi|}.
\end{equation} 

This can be proven by doing cancellations with the front factor of the false theta function \eqref{False_theta2}:
\begin{equation}\label{False_theta2}\frac{(-q;q)_\infty}{(q;q)_\infty}\sum_{j\geq 0} q^{{(3j^2+j)}/{2}}(1-q^{2j+1}) = {(-q;q)_\infty}\sum_{j\geq 0} \frac{q^{{(3j^2+j)}/{2}}}{(q;q)_{2j}(q^{2j+2};q)_\infty}.
\end{equation}
The expression \eqref{False_theta2} is the generating function of partitions with weights $\omega_2^*$. The front factor $(-q;q)_\infty$ is the generating function for the number of partitions into distinct parts. Therefore, for our interpretation, every part can appear at least once. For a non-negative integer $j$ the summand is the generating function for the number of partitions, where $2j+1$ does not appear as a part, every number up to $j-1$ appears at least 3 times, and $j$ appears at least 2 times, as $(3j^2+j)/2=1+1+1+2+2+2+\dots +(j-1)+(j-1)+(j-1)+j+j$. 

This weight is also non-trivial and a sum of multiplicative terms. This is exemplified in Table~\ref{what_a_mess}.

\begin{table}[htb]\caption{Example of Theorem~\ref{AlladisDream2} with $|\pi|=10$.}\label{what_a_mess}
\begin{center}\vspace{-.5cm}
\[\begin{array}{cc|cc|cc}
\pi\in\mathcal{U}  & {\omega}_{2}^* &  \pi\in\mathcal{U} & {\omega}_{2}^* &  \pi\in\mathcal{U} & {\omega}_{2}^*\\[-2ex]& & & & &\\
(10) 		 & 2		&   (5,3,2) 		& 	8  	&	(3,3,3,1)				&   2	\\
(9,1)		 & 2		&   (5,3,1,1) 		&  	2  	&	(3,3,2,2)				&   4	\\
(8,2)		& 4			&   (5,2,2,1) 		&  	4  	&	(3,3,2,1,1)				&   0	\\
(8,1,1) 	 & 2		&   (5,2,1,1,1)	 	&  	8 	&	(3,3,1,1,1,1)			&   0	\\
(7,3) 		 & 4		&   (5,1,1,1,1,1)  	& 	4	&	(3,2,2,2,1)				&   4	\\
(7,2,1) 	&  4		&   (4,4,2)  		& 	4	&	(3,2,2,1,1,1)			&   6	\\
(7,1,1,1) 	&  4		&   (4,4,1,1)  		&  	2	&	(3,2,1,1,1,1,1)			&   4	\\
(6,4) 		 & 4		&   (4,3,3) 		&   4 	&	(3,1,1,1,1,1,1,1)		&  	2	\\
(6,3,1) 	 & 4		&   (4,3,2,1)		& 	8 	&   (2,2,2,2,2)				&   2	\\
(6,2,2) 	&  4		&   (4,3,1,1,1)		&   4	&   (2,2,2,2,1,1)			&   2	\\
(6,2,1,1) 	&  4		&   (4,2,2,2)		&   4	&   (2,2,2,1,1,1,1)			&   8	\\
(6,1,1,1,1) & 	4		&   (4,2,2,1,1)		&   4	& 	(2,2,1,1,1,1,1,1)		&   6	\\
(5,5) 		&  2		&   (4,2,1,1,1,1)	&   8  	&	(2,1,1,1,1,1,1,1,1)		&  	4	\\
(5,4,1) 	& 4			&   (4,1,1,1,1,1,1)	&   4	&   (1,1,1,1,1,1,1,1,1,1)	&  	2	\\  		
\end{array}\]
The summation of all $\omega^*_{2}(\pi)$ values for $|\pi|=10$ equals 162, as in the values of Table~\ref{Alladi_Big_Dream_Table}.
\end{center}
\end{table}

Hence, we get the similar result to Theorem~\ref{Half_Weight_Theorem_Most_General}:

\begin{theorem} \[\sum_{\pi\in\mathcal{D}} \widetilde{\omega}_2 (\pi) q^{|\pi|} = \sum_{\pi \in \mathcal{U}} {\omega_2'}(\pi)q^{|\pi|}= \sum_{\pi \in \mathcal{U}} {\omega_2^*}(\pi)q^{|\pi|},\] where weights $\widetilde{\omega}_{2}(\pi)$, ${\omega_2'}(\pi)$, and $\omega_2^*$ are as in \eqref{the_big_weight_1}, \eqref{the_big_weight} and \eqref{last_weight}, respectively.
\end{theorem}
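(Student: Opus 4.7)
The first equality of the theorem is precisely Theorem~\ref{AlladisDream2}, so the only new content is the second equality $\sum_{\pi\in\mathcal{U}}\omega_2'(\pi)q^{|\pi|} = \sum_{\pi\in\mathcal{U}}\omega_2^*(\pi)q^{|\pi|}$. By Theorem~\ref{Analytic_Double_weight_THM} the left-hand side coincides with the false theta generating function $\frac{(-q;q)_\infty}{(q;q)_\infty}\sum_{j\geq 0}q^{(3j^2+j)/2}(1-q^{2j+1})$, so the plan reduces to showing that this analytic expression equals $\sum_{\pi\in\mathcal{U}}\omega_2^*(\pi)q^{|\pi|}$.

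First I would perform the analytic simplification recorded in \eqref{False_theta2}: writing $(q;q)_\infty = (q;q)_{2j}(q^{2j+1};q)_\infty$ and cancelling $1-q^{2j+1}$ with the leading factor of $(q^{2j+1};q)_\infty$ yields
\begin{equation*}
\frac{(-q;q)_\infty}{(q;q)_\infty}\sum_{j\geq 0} q^{(3j^2+j)/2}(1-q^{2j+1}) = (-q;q)_\infty \sum_{j\geq 0}\frac{q^{(3j^2+j)/2}}{(q;q)_{2j}(q^{2j+2};q)_\infty}.
\end{equation*}
Using the key decomposition $(3j^2+j)/2 = 3(1+2+\cdots+(j-1)) + 2j$, the $j$-summand on the right is, for $j\geq 1$, the generating function for partitions $\pi_{\text{base}}$ in which each $i\in\{1,\dots,j-1\}$ occurs at least three times, the part $j$ occurs at least twice, the part $2j+1$ is absent, and every other part is unrestricted. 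For $j=0$ the summand reduces to $1/(q^2;q)_\infty$, the generating function for partitions avoiding the part $1$.

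The final step absorbs the external factor $(-q;q)_\infty$, which I read as an independent binary choice, for each positive integer $i$, of whether to append one extra copy of $i$ to $\pi_{\text{base}}$. Fixing $\pi\in\mathcal{U}$ and the index $j$, the contribution to the coefficient of $q^{|\pi|}$ is the number of admissible decompositions $\pi = \pi_{\text{base}}\sqcup\pi_{\text{extra}}$; since these choices decouple over $i$, the count is a product of local factors. A base constraint $f_i(\pi_{\text{base}})\geq m$ with $m\geq 1$ admits both values of the extra copy exactly when $f_i(\pi)\geq m+1$, giving $\chi(f_i(\pi)\geq m)\cdot 2^{\chi(f_i(\pi)\geq m+1)}$; the absent-part constraint $f_i(\pi_{\text{base}})=0$ requires $f_i(\pi)\leq 1$, contributing $\chi(f_i(\pi)\leq 1)$; an unrestricted index contributes $2^{\chi(f_i(\pi)\geq 1)}$. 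Reading off these local factors term by term in \eqref{last_weight} reproduces the $j=0$ piece and the $j\geq 1$ pieces, and summing over $j$ yields $\omega_2^*(\pi)$.

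The main obstacle is the careful bookkeeping that aligns each local factor with its precise placement in \eqref{last_weight}, namely $2^{\chi(f_i(\pi)\geq 4)}$ for $1\leq i<j$, $2^{\chi(f_j(\pi)\geq 3)}$ at $i=j$, the indicator $\chi(f_{2j+1}(\pi)\leq 1)$ at the forbidden part, and $2^{\chi(f_n(\pi)\geq 1)}$ for $n>j$ with $n\neq 2j+1$. One must also confirm that the $j$-sum defining $\omega_2^*(\pi)$ is genuinely finite for every $\pi$: once $j$ exceeds the largest part of $\pi$, some $\chi(f_i(\pi)\geq 3)$ with $i<j$ vanishes, so only finitely many terms contribute, as remarked after \eqref{last_weight}.
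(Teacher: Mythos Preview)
Your proposal is correct and follows essentially the same approach as the paper: you reduce to the identity \eqref{False_theta2}, interpret each $j$-summand via the decomposition $(3j^2+j)/2 = 3(1+2+\cdots+(j-1))+2j$ as partitions with the base constraints on $f_1,\dots,f_j$ and the forbidden part $2j+1$, and then absorb the external factor $(-q;q)_\infty$ as independent binary ``extra-copy'' choices to read off the local factors in $\omega_2^*$. The paper's argument is terser---it records \eqref{False_theta2}, notes the same decomposition of $(3j^2+j)/2$, and states that $(-q;q)_\infty$ allows every part to appear one more time---but your explicit bookkeeping of the four types of local factors (and the check that the $j$-sum terminates) is exactly the content the paper leaves implicit.
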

\section{A Weighted Partition Identity Related to $\frac{1}{(q;q)_\infty}\sum_{j=0}^\infty q^{(3j^2+j)/2}(1-q^{2j+1})$}
\label{Section_Restricted}

In Section~\ref{GollnitzWeights}, we have proven Theorems~\ref{GG_1_combinatorial_theorem} and \ref{Half_Weight_Theorem_2} involving partitions with distinct odd parts counted with trivial weights. In this section we will derive another partition identity involving partitions with distinct odd parts, this time with non-trivial weights. To this end we prove the following theorem.

\begin{theorem}\label{RestrictedTHM}\begin{equation}\label{RestrictedEQN}\frac{1}{(q^2;q^2)_\infty}\sum_{n=0}^{\infty} q^{(2n+1)n}(-q^{2n+2};q)_\infty = \frac{1}{(q;q)_\infty}\sum_{j=0}^\infty q^{(3j^2+j)/2}(1-q^{2j+1}). \end{equation}
\end{theorem}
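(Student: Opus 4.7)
The plan is to reduce both sides of \eqref{RestrictedEQN} to a common form by collapsing the infinite products on the left and invoking Fine's alternative series for the false theta function on the right, after which an elementary parity-split collapses the remaining sums.

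First I would simplify the left-hand side. Using $(-q^{2n+2};q)_\infty = (-q;q)_\infty/(-q;q)_{2n+1}$ together with $(-q;q)_\infty(q;q)_\infty = (q^2;q^2)_\infty$, the ratio of infinite products telescopes to
\begin{equation*}
\frac{(-q;q)_\infty}{(q^2;q^2)_\infty} = \frac{1}{(q;q)_\infty},
\end{equation*}
so the left side of \eqref{RestrictedEQN} becomes $\frac{1}{(q;q)_\infty}\sum_{n\geq 0} q^{2n^2+n}/(-q;q)_{2n+1}$.

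Next, I would quote Fine's identity already recalled in the excerpt as the equality of \eqref{Fine2} and \eqref{Fine3}, namely
\begin{equation*}
\sum_{j\geq 0} q^{(3j^2+j)/2}(1-q^{2j+1}) = \sum_{n\geq 0} \frac{(-1)^n q^{(n^2+n)/2}}{(-q;q)_n},
\end{equation*}
to rewrite the right side of \eqref{RestrictedEQN} as $\frac{1}{(q;q)_\infty}\sum_{n\geq 0} (-1)^n q^{(n^2+n)/2}/(-q;q)_n$. After canceling the common prefactor $1/(q;q)_\infty$, the theorem reduces to the purely $q$-series identity
\begin{equation*}
\sum_{n\geq 0} \frac{q^{2n^2+n}}{(-q;q)_{2n+1}} = \sum_{n\geq 0} \frac{(-1)^n q^{(n^2+n)/2}}{(-q;q)_n}.
\end{equation*}

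Finally, I would handle this last equality by splitting the right-hand sum according to the parity of $n$. Writing $n=2j$ and $n=2j+1$ respectively produces
\begin{equation*}
\sum_{j\geq 0}\frac{q^{2j^2+j}}{(-q;q)_{2j}} - \sum_{j\geq 0}\frac{q^{2j^2+3j+1}}{(-q;q)_{2j+1}}.
\end{equation*}
Combining the two $j$-th summands over the common denominator $(-q;q)_{2j+1}=(-q;q)_{2j}(1+q^{2j+1})$ gives
\begin{equation*}
\frac{q^{2j^2+j}(1+q^{2j+1}) - q^{2j^2+3j+1}}{(-q;q)_{2j+1}} = \frac{q^{2j^2+j}}{(-q;q)_{2j+1}},
\end{equation*}
which is precisely the $n=j$ term of the desired left-hand sum, completing the proof. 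The only genuine step of substance is the invocation of Fine's formula to rewrite the false theta series with $(-q;q)_n$ in the denominator; everything else is bookkeeping with $q$-Pochhammer symbols, and there is no real obstacle once that reformulation is identified.
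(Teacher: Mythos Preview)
Your proof is correct and is essentially the same as the paper's: both reduce the statement to the equality of \eqref{Fine2} and \eqref{Fine3} via the product simplification $(-q^{2n+2};q)_\infty/(q^2;q^2)_\infty = 1/\bigl((q;q)_\infty(-q;q)_{2n+1}\bigr)$, and both establish the residual identity $\sum_{n\geq 0} q^{(2n+1)n}/(-q;q)_{2n+1} = \sum_{n\geq 0} (-1)^n q^{(n^2+n)/2}/(-q;q)_n$ by the parity split and the elementary telescoping $\frac{q^{(2j+1)j}}{(-q;q)_{2j}} - \frac{q^{(2j+1)(j+1)}}{(-q;q)_{2j+1}} = \frac{q^{(2j+1)j}}{(-q;q)_{2j+1}}$. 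The only difference is the order in which you apply the two steps.
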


\begin{proof} This theorem amounts to manipulating the equality of \eqref{Fine2} and \eqref{Fine3}. We point out that doing the even--odd index split of the summand of \eqref{Fine3} and using \[\frac{q^{(2n+1)n}}{(-q;q)_{2n}} - \frac{q^{(2n+1)n+(2n+1)}}{(-q;q)_{2n+1}} = \frac{q^{(2n+1)n}}{(-q;q)_{2n+1}}\] yields
\begin{equation}\label{Example10_with_x_substituted} 
\sum_{n=0}^{\infty} \frac{ q^{(2n+1)n}}{(-q;q)_{2n+1}} = \sum_{j=0}^\infty q^{(3j^2+j)/2}(1-q^{2j+1}).
\end{equation}
The identity \eqref{Example10_with_x_substituted} appears in the Ramanujan's lost notebooks \cite[(9.4.4), p. 233]{LostNotebook_1}. Multiplying both sides of \eqref{Example10_with_x_substituted} with \[ \frac{(-q;q)_\infty}{(q^2;q^2)_\infty} = \frac{1}{(q;q)_\infty},\] and doing the necessary simplifications on the left, we arrive at \eqref{RestrictedEQN}.
\end{proof}

Next we define two sets of partitions. Let $\P_{dom}$ be the set of partitions with distinct odd parts, where the smallest positive integer that is not a part is odd, and let $\U_{ic}$ be the set of ordinary partitions subject to the initial condition that if $2j+1$ is the smallest positive odd number that is not a part of the partition, then every even natural number $\leq j$ appears as a part, and all the odd natural numbers $\leq j$ appear at least twice in this partition. We rewrite \eqref{RestrictedEQN} suggestively as
\begin{equation}\label{R_rewritten}\sum_{n=0}^{\infty} \frac{q^{(2n+1)n}}{(q^2;q^2)_{n}}\frac{(-q^{2n+2};q^2)_\infty}{(q^{2n+2};q^2)_\infty} (-q^{2n+3};q^2)_\infty = \sum_{j=0}^\infty \frac{q^{(3j^2+j)/2}}{(q;q)_{2j}(q^{2j+2};q)_\infty}\end{equation} to show that the left and the right sides of \eqref{RestrictedEQN} are related with counts for the partitions from the sets $\P_{dom}$ and $U_{ic}$, respectively. Observe that \[(2n+1)n = 1+2+\dots+2n\] and \[\frac{q^{(2n+1)n}}{(q^2;q^2)_{n}}\] is the generating function for the number of partitions with distinct odd parts where every part is $\leq 2n$ and every integer $\leq 2n$ appears at least once. The factor \[\frac{(-q^{2n+2};q^2)_\infty}{(q^{2n+2};q^2)_\infty}\] is the generating function for the number of partitions into even parts $\geq 2n+2$ where each different even part is counted with weight 2. Putting the factors in the left-hand summand of \eqref{R_rewritten} together, we see that the left side sum is a weighted count of partitions from $\P_{dom}$.  Also note that \[(3j^2+j)/2 = (1+2+3+\dots+j)+(1+3+5+\dots+(2j-1)),\] which is enough to see that the right side of \eqref{R_rewritten} is the generating function for the number of partitions from $\U_{ic}$. These observations prove the following

\begin{theorem}\label{Restricted_Comp_THM} \begin{equation*}
\sum_{\pi\in\P_{dom}} 2^{\tau(\pi)} q^{|\pi|} = \sum_{\pi\in\U_{ic}} q^{|\pi|},
\end{equation*}
where, for a partition $\pi$, $\tau(\pi)$ is the number of different even parts of $\pi$ larger than the smallest positive odd integer that is not a part of $\pi$.
\end{theorem}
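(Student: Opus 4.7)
The plan is to derive Theorem~\ref{Restricted_Comp_THM} as a combinatorial consequence of the analytic identity in Theorem~\ref{RestrictedTHM}. Specifically, I would start from the rewritten form \eqref{R_rewritten} and interpret each side as the appropriate weighted partition generating function, reading the weights and the membership conditions directly off the $q$-Pochhammer factors in each summand.

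For the left-hand side of \eqref{R_rewritten}, observe that $(2n+1)n = 1+2+\cdots+2n$, so the base factor $q^{(2n+1)n}$ records a mandatory partition containing each of $1,2,\dots,2n$ exactly once. The factor $1/(q^2;q^2)_n$ then allows additional even parts not exceeding $2n$, while the factor $(-q^{2n+3};q^2)_\infty$ appends distinct odd parts $\geq 2n+3$; together these force all odd parts to be distinct and $2n+1$ to be absent, so that $2n+1$ becomes the smallest positive integer not appearing. The remaining factor $(-q^{2n+2};q^2)_\infty/(q^{2n+2};q^2)_\infty$ contributes even parts $\geq 2n+2$, with each distinct even value weighted by $2$, via the same reasoning as in the combinatorial interpretation \eqref{weights_of_overpartitions} (now applied to the set of even parts $\geq 2n+2$). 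Summing over $n\geq 0$, every $\pi\in\P_{dom}$ is produced exactly once and its total weight is $2^{\tau(\pi)}$, since $\tau(\pi)$ counts the distinct even parts strictly exceeding $2n+1$.

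For the right-hand side, the key decomposition is $(3j^2+j)/2 = (1+2+\cdots+j) + (1+3+\cdots+(2j-1))$, which builds a base partition in which each integer from $1$ to $j$ appears at least once and each odd integer from $1$ to $2j-1$ appears at least once. Thus each even $i\leq j$ has multiplicity $\geq 1$, each odd $i\leq j$ has multiplicity $\geq 2$, and odd integers $j<i<2j+1$ have multiplicity $\geq 1$. The factor $1/(q;q)_{2j}$ allows free adjustments to parts $\leq 2j$, while $1/(q^{2j+2};q)_\infty$ contributes arbitrary parts $\geq 2j+2$, leaving $2j+1$ absent. This matches precisely the definition of $\U_{ic}$, with each such partition appearing exactly once in the sum.

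The main obstacle is bookkeeping uniqueness: one must verify that the summation index $n$ on the left (respectively $j$ on the right) is uniquely determined by the partition being counted, so that each partition contributes to exactly one term. On the left this holds because $n$ is fixed by requiring that $2n+1$ be the smallest positive integer not appearing; on the right $j$ is fixed by requiring that $2j+1$ be the smallest odd integer not appearing. Once these uniqueness properties are established, the two generating functions coincide with the stated weighted partition counts, and the theorem follows immediately from the analytic identity of Theorem~\ref{RestrictedTHM}.
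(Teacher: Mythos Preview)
Your proposal is correct and follows essentially the same route as the paper: both derive the combinatorial statement from the analytic identity of Theorem~\ref{RestrictedTHM} by rewriting it as \eqref{R_rewritten} and interpreting each side factor-by-factor, using the decompositions $(2n+1)n = 1+2+\cdots+2n$ and $(3j^2+j)/2 = (1+2+\cdots+j)+(1+3+\cdots+(2j-1))$ to read off the membership conditions for $\P_{dom}$ and $\U_{ic}$. Your explicit check that the summation index is uniquely determined by the partition is a useful clarification that the paper leaves implicit.
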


We conclude with an example of this result in Table~\ref{Table_Restricted}.

\begin{table}[htb]\caption{Example of Theorem~\ref{Restricted_Comp_THM} with $|\pi|=8$.}\label{Table_Restricted}
\begin{center}\vspace{-.5cm}
\[\begin{array}{cc||c}
\pi\in\P_{dom}  &  2^{\tau(\pi)} &  \pi\in\U_{ic} \\[-2ex]& &  \\
(8) 		& 2		&   (8) 		 		\\
(6,2)		& 4		&   (6,2) 		  		\\
(5,3)		& 1		&   (6,1,1) 		 	\\
(5,2,1) 	& 1		&   (5,3)	 	 		\\
(4,4) 		& 2		&   (5,1,1,1)  			\\
(4,2,2) 	& 4		&   (4,4)  			\\
(2,2,2,2) 	& 2		&   (4,2,2)  			\\
 			& 		&   (4,2,1,1) 		\\
 		 	& 		&   (4,1,1,1,1)		\\
 			&  		&   (3,3,2)		\\
 			&  		&   (3,2,1,1,1)		\\
 			&  		&   (2,2,2,2)		\\	
 			&  		&   (2,2,2,1,1)		\\	
 			&		&	(2,2,1,1,1,1)	\\
 			&		&	(2,1,1,1,1,1,1)	\\
 			&		&	(1,1,1,1,1,1,1,1)\\
\end{array}\]
The sum of the weights $2+4+1+1+2+4+2=16$ is the same as the number of partitions from $\U_{ic}$ with $|\pi|=8$.
\end{center}
\end{table}

\section{Acknowledgement}
The authors would like to thank George E. Andrews, Krishna Alladi and Andrew V. Sills for their kind interest and helpful suggestions. The authors would also like to thank Jeramiah A. Hocutt and Benjamin P. Russo for their careful reading of the manuscript.



\begin{thebibliography}{99}

\bibitem{AlladiWeighted} K. Alladi, \textit{Partition identities involving gaps and weights}, Trans. Amer. Math. Soc. \textbf{349} (1997), no. 12, 5001-5019.

\bibitem{AlladiGollnitz} K. Alladi, \textit{On a partition theorem of G\"ollnitz and quartic transformations}, With an appendix by Basil Gordon, J. Number Theory. \textbf{349} (1998), no. 2, 153-180.

\bibitem{Alladi_Lebesgue} K. Alladi, \textit{Analysis of a generalized Lebesgue identity in Ramanujan's Lost Notebook}, Ramanujan J. \textbf{29} (2012), 339-358.

\bibitem{AlladiAnnalsOfComb} K. Alladi, \textit{Partitions with non-repeating odd parts and combinatorial identities}, Ann. Comb. \textbf{20} (2016), 1-20.

\bibitem{Alladi_private_comm} K. Alladi, {Private communications}.



\bibitem{AlladiBerkovich2} K. Alladi, and A. Berkovich, \textit{New weighted Rogers-Ramanujan partition theorems and their implications}, Trans. Amer. Math. Soc. \textbf{354} (2002), no. 7, 2557-2577.
\bibitem{AlladiBerkovich} K. Alladi, and A. Berkovich, \textit{G\"ollnitz-Gordon partitions with weights and parity conditions}. Zeta functions, topology and quantum physics,  Dev. Math., \textbf{14} (2005), 1-17,




\bibitem{Theory_of_Partitions} G. E. Andrews, \textit{The theory of partitions}, Cambridge Mathematical Library, Cambridge University Press, Cambridge, 1998. Reprint of the 1976 original. MR1634067 (99c:11126)

\bibitem{PrivateConv}  G. E. Andrews, Private communications.

\bibitem{LostNotebook_1} G. E. Andrews, and B. C. Berndt \textit{Ramanujan's lost notebook. Part I}, Springer, New York, 2005. 

\bibitem{LostNotebook_2} G. E. Andrews, and B. C. Berndt \textit{Ramanujan's lost notebook. Part II}, Springer, New York, 2008. 



\bibitem{BerkovichGarvanCrank} A. Berkovich, and F. G. Garvan, \textit{Some observations on Dyson's New Symmetries of Partitions}, \textbf{100} (2002), no. 1, 61-93.
   


\bibitem{BressoudBook} D. M. Bressoud, \textit{Proofs and confirmations. The story of the alternating sign matrix conjecture}, MAA, Washington, DC. Cambridge University Press, Cambridge, 1999. 


\bibitem{Overpartitions_Paper} S. Corteel, and J. Lovejoy, \textit{Overpartitions}, Trans. Amer. Math. Soc. \textbf{356} (2004), no. 4, 1623-1635.


\bibitem{FineBook} N. J. Fine, \textit{Basic hypergeometric series and applications}, With a foreword by George E. Andrews, Mathematical Surveys and Monographs, 27. American Math. Soc., Providence, RI, 1988.

\bibitem{GasperRahman} G. Gasper, and M. Rahman, \textit{Basic Hypergeometric Series}, Vol. 96. Cambridge university press, 2004.

\bibitem{Gollnitz} H. G\"ollnitz, \textit{Partitionen mit Differenzenbedingungen}, J. reine angew. Math. \textbf{225} (1967), 154-190.

\bibitem{Gordon} B. Gordon, \textit{Some Continued Fractions of the Rogers-Ramanujan Type}, Duke Math. J. \textbf{32} (1965), 741-748.


\bibitem{Rogers} L. J. Rogers, \textit{On two theorems of combinatory analysis and some allied identities}, Proc. London Math. Soc. \textbf{16} (1917), 315-336.

\bibitem{Slater} L. J. Slater, \textit{Further Identities of the Rogers-Ramanujan Type}. Proc. London Math. Soc. Ser. 2 \textbf{54} (1952), 147-167.

\bibitem{Starcher} G. W. Starcher, \textit{On identities arising from solutions to q-difference equations and some interpretations in number theory}, Amer. J. Math. \textbf{53} (1931) 801-816.

\bibitem{Uncu} A. K. Uncu, \textit{Weighted Rogers-Ramanujan partitions and Dyson Crank}, arXiv:1603.00399 [math.CO].

   \end{thebibliography}
\end{document}